\newtheorem{thm}{Theorem}[section]
\newtheorem{lemma}[thm]{Lemma}
\newtheorem{coro}[thm]{Corollary}
\theoremstyle{remark}           
\newtheorem{remark}[thm]{Remark}
\newtheorem{example}[thm]{Example}
\newcommand{\diam}{\operatorname{diam}}
\newcommand{\dist}{\operatorname{dist}}
\newcommand{\R}{\mathbb R}
\newcommand{\Om}{\Omega}
\newcommand{\N}{\mathbb N}
\newcommand{\Z}{\mathbb Z}
\newcommand{\W}{\mathcal W}
\newcommand{\M}{{\mathcal M}}
\newcommand{\sub}{\subset}
\newcommand{\Char}[1]{\chi_{\lower 1.5pt\hbox{$\scriptscriptstyle #1$}}}
\newcommand{\ch}[1]{{\mbox{\raise 1pt\hbox{\Large$\chi$}}}_{\lower 1pt\hbox{$\scriptstyle #1$}}}
\newcommand{\capp}[1]{\operatorname{cap}_{#1}}
\def\vint{\mathop{\mathchoice%
          {\setbox0\hbox{$\displaystyle\intop$}\kern 0.22\wd0%
           \vcenter{\hrule width 0.6\wd0}\kern -0.82\wd0}%
          {\setbox0\hbox{$\textstyle\intop$}\kern 0.2\wd0%
           \vcenter{\hrule width 0.6\wd0}\kern -0.8\wd0}%
          {\setbox0\hbox{$\scriptstyle\intop$}\kern 0.2\wd0%
           \vcenter{\hrule width 0.6\wd0}\kern -0.8\wd0}%
          {\setbox0\hbox{$\scriptscriptstyle\intop$}\kern 0.2\wd0%
           \vcenter{\hrule width 0.6\wd0}\kern -0.8\wd0}}%
          \mathopen{}\int}                  
\gdef\eeaa#1pt{#1}}      
\def\accentadjtext#1{\setbox0\hbox{$#1$}\kern   
                \expandafter\eeaa\the\fontdimen1\textfont1 \ht0 }
\def\accentadjscript#1{\setbox0\hbox{$#1$}\kern 
                \expandafter\eeaa\the\fontdimen1\scriptfont1 \ht0 }
\def\accentadjscriptscript#1{\setbox0\hbox{$#1$}\kern   
                \expandafter\eeaa\the\fontdimen1\scriptscriptfont1 \ht0 }
\def\accentadjtextback#1{\setbox0\hbox{$#1$}\kern       
                -\expandafter\eeaa\the\fontdimen1\textfont1 \ht0 }
\def\accentadjscriptback#1{\setbox0\hbox{$#1$}\kern     
                -\expandafter\eeaa\the\fontdimen1\scriptfont1 \ht0 }
\def\accentadjscriptscriptback#1{\setbox0\hbox{$#1$}\kern 
                -\expandafter\eeaa\the\fontdimen1\scriptscriptfont1 \ht0 }
\def\itoverline#1{{\mathsurround0pt\mathchoice
        {\rlap{$\accentadjtext{\displaystyle #1}
                \accentadjtext{\vrule height1.593pt}
                \overline{\phantom{\displaystyle #1}
                \accentadjtextback{\displaystyle #1}}$}{#1}}
        {\rlap{$\accentadjtext{\textstyle #1}
                \accentadjtext{\vrule height1.593pt}
                \overline{\phantom{\textstyle #1}
                \accentadjtextback{\textstyle #1}}$}{#1}}
        {\rlap{$\accentadjscript{\scriptstyle #1}
                \accentadjscript{\vrule height1.593pt}
                \overline{\phantom{\scriptstyle #1}
                \accentadjscriptback{\scriptstyle #1}}$}{#1}}
        {\rlap{$\accentadjscriptscript{\scriptscriptstyle #1}
                \accentadjscriptscript{\vrule height1.593pt}
                \overline{\phantom{\scriptscriptstyle #1}
                \accentadjscriptscriptback{\scriptscriptstyle #1}}$}{#1}}}}
\newcommand{\iol}{\itoverline}
\begin{document}

\title[Hardy--Sobolev inequalities and capacities]
{Hardy--Sobolev inequalities and weighted capacities \\ in metric spaces}

\author[L. Ihnatsyeva]{Lizaveta Ihnatsyeva}   
\address[L.I.]{Department of Mathematics, Kansas State University, Manhattan, KS 66506, USA}
\email{ihnatsyeva@math.ksu.edu}

\author[J. Lehrb\"ack]{Juha Lehrb\"ack}   
\address[J.L.]{Department of Mathematics and Statistics, P.O. Box 35, FI-40014 University of Jyvaskyla, Finland}
\email{juha.lehrback@jyu.fi}

\author[A. V. V\"ah\"akangas]{Antti V. V\"ah\"akangas}
\address[A.V.V.]{Department of Mathematics and Statistics, P.O. Box 35, FI-40014 University of Jyvaskyla, Finland}
 \email{antti.vahakangas@iki.fi}

\begin{abstract}
Let $\Omega$ be an open set in a metric measure space $X$.
Our main result gives an equivalence between the validity
of a weighted Hardy--Sobolev inequality in $\Omega$ 
and quasiadditivity of a weighted capacity with
respect to Whitney covers of $\Omega$. 
Important ingredients in 
the proof include the use of a discrete
convolution as a capacity test function
and a Maz'ya type characterization of
weighted Hardy--Sobolev inequalities. 
\end{abstract}

\subjclass[2020]{
	26D10, 
	31C15, 
	31E05, 
 	35A23
}

\date{\today}

\maketitle


\section{Introduction}

Let $\Omega\subsetneq \R^n$ be an open set. 
We say that $(q,p,\beta)$-Hardy--Sobolev inequality holds in $\Omega$,
for $1\le p,q<\infty$ and $\beta\in\R$,
if there exists a constant $C>0$ such that the inequality
\begin{equation}\label{eq:(p,q)-weightedhardy_intro}
\biggl(\int_{\Omega} |u(x)|^q \,d(x,\Omega^c)^{\frac{q}{p}(n-p+\beta)-n} \,dx\biggr)^{\frac{1}{q}}
   \leq C\biggl(\int_{\Omega} |\nabla u(x)|^p \,d(x,\Omega^c)^{\beta}\,dx\biggr)^{\frac{1}{p}}
\end{equation}
is valid for all functions $u\in C_0^{\infty}(\Omega)$,
or equivalently, by approximation, for all Sobolev functions
$u\in W_0^{1,p}(\Omega)$ such that the support of $u$ is a compact subset of $\Omega$.
Here $d(x,\Omega^c):=\dist(x,\Omega^c)$
denotes the distance from $x\in \Omega$ 
to the complement $\Omega^c:=\R^n\setminus\Omega$.
When $p=q$, the inequality in~\eqref{eq:(p,q)-weightedhardy_intro} is
called the (weighted) $(p,\beta)$-Hardy inequality, and for $q = \frac{np}{n-p}$
one obtains a weighted Sobolev inequality, which in the unweighted case $\beta=0$
reduces to the usual Sobolev inequality.
Often it is natural (or even necessary) to restrict the parameters in~\eqref{eq:(p,q)-weightedhardy_intro}
to the range $1\le p\le q\le \frac{np}{n-p}$, giving a scale of inequalities interpolating
between the (weighted) Hardy and Sobolev inequalities. 

In this paper, we consider analogous inequalities 
for the so-called Newtonian (Sobolev) functions
in more general metric spaces,
under the standard assumptions that the space $X$ is equipped with a doubling measure and
supports Poincar\'e inequalities. The norm of the gradient on the right-hand 
side of~\eqref{eq:(p,q)-weightedhardy_intro} is then replaced by
a ($p$-weak, $p$-integrable) upper gradient $g_u$ of $u$.
We are mainly interested in the interplay between
Hardy--Sobolev inequalities and conditions given in terms of (weighted) capacities;
see Section~\ref{sect:HScap} for the definitions. We begin by proving a Maz'ya type characterization
for the validity of Hardy--Sobolev inequalities in Theorem~\ref{t.mazya_char_weighted_qp}.
This is a straight-forward generalization of many earlier results,
but in particular it serves to illustrate that the weighted capacity is a natural tool
in questions related to Hardy--Sobolev inequalities.

Our main results in Section~\ref{sect:qadd} relate Hardy--Sobolev inequalities to
quasiadditivity properties of weighted capacities. Recall that
capacities are (typically) subadditive set functions, which, however, very seldom enjoy full additivity.
Quasiadditivity is a weak converse of the subadditivity, involving a multiplicative constant
and applicable only to certain types of sets, often given in terms of Whitney-type covers or
decompositions. The formulations of the results in the general metric setting are slightly complicated
due to the various parameters, see Section~\ref{sect:qadd}, but 
for an open set $\Omega\subsetneq\R^n$ our main result,
Theorem~\ref{t.weightedhardy_c},
can be stated as follows.

\begin{thm}\label{t.qadd_intro}
Let $1<p\le q\le \frac{np}{n-p}$ and $\beta\in\R$,
and let 
$\mathcal{W}_c(\Omega)=\{B_i:i\in\N\}$
be a cover of an open set $\Omega\subsetneq \R^n$ 
by Whitney balls $B_i=B(x_i,cd(x_i,\Omega^c))$,
with $0<c<1/53$.
Then the following conditions are equivalent:
\begin{enumerate}[label=\textup{(\roman*)}] 

\item\label{it.weightedhardy_c_HS_intro} 
Hardy--Sobolev inequality \eqref{eq:(p,q)-weightedhardy_intro} holds in $\Omega$.

\item\label{it.weightedhardy_c_compact_QR_intro} There exist constants $C_1$ and $C_2$ such that
the weighted relative capacity 
satisfies the quasiadditivity property
\begin{equation*}
\sum_{i=1}^\infty \capp{p,\beta}(E\cap  B_i,\Omega)^{\frac q p}\le C_1 \capp{p,\beta}(E,\Omega)^{\frac q p},
\end{equation*}
for all sets $E\Subset\Omega$, and 
the capacity lower bound 
\[
\capp{p,\beta}(B_i,\Omega) \ge C_2\, d(x_i,\Omega^c)^{n+\beta-p}
\]
holds for every $i\in \N$.
\end{enumerate}
\end{thm}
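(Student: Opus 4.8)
I would route both implications through the Maz'ya type characterization of Theorem~\ref{t.mazya_char_weighted_qp}. Write $\sigma$ for the measure on $\Omega$ with density $x\mapsto d(x,\Omega^c)^{\frac qp(n-p+\beta)-n}$, so that the left side of \eqref{eq:(p,q)-weightedhardy_intro} equals $\bigl(\int_\Omega|u|^q\,d\sigma\bigr)^{1/q}$; recall that Theorem~\ref{t.mazya_char_weighted_qp} identifies condition~\ref{it.weightedhardy_c_HS_intro} with the capacitary estimate
\begin{equation*}
\sigma(K)^{p/q}\le C\,\capp{p,\beta}(K,\Omega)\quad\text{for every compact }K\Subset\Omega. \tag{M}
\end{equation*}
I would then use throughout the standard structure of $\mathcal{W}_c(\Omega)$ --- bounded overlap of $\{B_i\}$ and of fixed dilates $\lambda B_i$, each relatively compact in $\Omega$ (this is where $c<1/53$ is used), together with the comparisons $d(\cdot,\Omega^c)\approx d(x_i,\Omega^c)=:d_i\approx\rad(B_i)$ and $|\lambda B_i|\approx d_i^n$ on any fixed dilate --- standard properties of the capacities (monotonicity; admissibility of truncations and of products with Lipschitz cutoffs; the Leibniz rule $g_{uv}\le|u|g_v+|v|g_u$; reduction to compactly supported test functions for $E\Subset\Omega$), and the elementary facts $\sum_i a_i^{q/p}\le(\sum_i a_i)^{q/p}$ and $(\sum_i a_i)^{p/q}\le\sum_i a_i^{p/q}$, valid since $1\le p\le q$.

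For \ref{it.weightedhardy_c_HS_intro}~$\Rightarrow$~\ref{it.weightedhardy_c_compact_QR_intro}, the capacity lower bound drops out of (M) applied to $K=\overline{B}(x_i,\tfrac c2 d_i)\subseteq B_i$: since $\sigma(K)\approx d_i^{\frac qp(n-p+\beta)}$, monotonicity gives $\capp{p,\beta}(B_i,\Omega)\ge\capp{p,\beta}(K,\Omega)\gtrsim\sigma(K)^{p/q}\approx d_i^{n+\beta-p}$. For quasiadditivity I would fix $E\Subset\Omega$, take $u$ admissible for $\capp{p,\beta}(E,\Omega)$ with compact support and nearly minimal weighted energy, and localize via Whitney cutoffs $\varphi_i$ ($\varphi_i\equiv1$ on $B_i$, $\spt\varphi_i\subseteq 2B_i$, $g_{\varphi_i}\lesssim d_i^{-1}$) --- or, following the paper, replace $u$ by a discrete convolution before localizing --- so that each $u\varphi_i$ is admissible for $\capp{p,\beta}(E\cap B_i,\Omega)$ and, by the Leibniz rule and $d(\cdot,\Omega^c)\approx d_i$ on $2B_i$,
\begin{equation*}
\capp{p,\beta}(E\cap B_i,\Omega)\ \lesssim\ \int_{2B_i}g_u^p\,d(\cdot,\Omega^c)^\beta\ +\ d_i^{-p}\int_{2B_i}u^p\,d(\cdot,\Omega^c)^\beta .
\end{equation*}
Raising to the power $q/p\ge1$ and summing: the first terms give $\lesssim\bigl(\int_\Omega g_u^p\,d(\cdot,\Omega^c)^\beta\bigr)^{q/p}$ by superadditivity and bounded overlap; for the second terms, H\"older on $2B_i$ (valid since $p\le q$) together with $d(\cdot,\Omega^c)\approx d_i$ converts $\bigl(d_i^{-p}\int_{2B_i}u^p\,d(\cdot,\Omega^c)^\beta\bigr)^{q/p}$ into a multiple of $\int_{2B_i}u^q\,d\sigma$ (the exponents of $d_i$ agree, since $\tfrac qp(n+\beta-p)-n=\tfrac qp(n-p+\beta)-n$), and summing and applying \ref{it.weightedhardy_c_HS_intro} to $u$ bounds the total by $C\int_\Omega u^q\,d\sigma\lesssim\bigl(\int_\Omega g_u^p\,d(\cdot,\Omega^c)^\beta\bigr)^{q/p}$. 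Both pieces are $\lesssim\capp{p,\beta}(E,\Omega)^{q/p}$, and the infimum over $u$ finishes.

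For \ref{it.weightedhardy_c_compact_QR_intro}~$\Rightarrow$~\ref{it.weightedhardy_c_HS_intro}, by Theorem~\ref{t.mazya_char_weighted_qp} I only need to verify (M). Fix compact $K\Subset\Omega$; only finitely many $B_i$ meet $K$ (those have $d_i$ bounded below and lie in a bounded set), so all sums below are finite. I would reduce everything to the local Hardy--Sobolev estimate: for every $i$ and every $E\subseteq B_i$,
\begin{equation*}
\sigma(E)\ \lesssim\ \capp{p,\beta}(E,\Omega)^{q/p}. \tag{L}
\end{equation*}
Given (L), $\sigma(K)\le\sum_i\sigma(K\cap B_i)\lesssim\sum_i\capp{p,\beta}(K\cap B_i,\Omega)^{q/p}\le C_1\,\capp{p,\beta}(K,\Omega)^{q/p}$ by quasiadditivity, that is, (M). To prove (L) I would first establish a capacity density estimate: for $E\subseteq\tfrac12 B_i$,
\begin{equation*}
\capp{p,\beta}(E,\Omega)\ \gtrsim\ \capp{p,\beta}(B_i,\Omega)\,\bigl(|E|/|B_i|\bigr)^{p/q},
\end{equation*}
obtained by testing an admissible $u$ for $\capp{p,\beta}(E,\Omega)$ against the Sobolev inequality available in doubling Poincar\'e spaces with target exponent $q\le\frac{np}{n-p}$ at the scale of $B_i$, where $d(\cdot,\Omega^c)\approx d_i$ makes the weight essentially constant, comparing $\capp{p,\beta}(\cdot,\Omega)$ near $B_i$ with an unweighted variational capacity, and splitting according to whether the average of $u$ over $B_i$ is small (Sobolev--Poincar\'e) or large (use that $u$ vanishes on $\Omega^c$, which is within distance $\approx d_i$ of $B_i$). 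Feeding in the hypothesis $\capp{p,\beta}(B_i,\Omega)\gtrsim d_i^{n+\beta-p}$ and $|B_i|\approx d_i^n$, the density estimate becomes $\capp{p,\beta}(E,\Omega)\gtrsim d_i^{\,n+\beta-p-np/q}|E|^{p/q}$; raising to the power $q/p$ and using $d(\cdot,\Omega^c)\approx d_i$ on $B_i$ gives $\capp{p,\beta}(E,\Omega)^{q/p}\gtrsim d_i^{\,\frac qp(n+\beta-p)-n}|E|\approx\sigma(E)$ (again $\tfrac qp(n+\beta-p)-n=\tfrac qp(n-p+\beta)-n$), and a general $E\subseteq B_i$ is handled by covering it with boundedly many half-balls $\tfrac12 B_j$ with $B_j$ comparable to $B_i$.

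The hard part will be (L), and within it the capacity density estimate: test functions for $\capp{p,\beta}(E,\Omega)$ are defined on all of $\Omega$, where the distance weight is far from constant on balls large enough to touch $\Omega^c$, so descending to a local, essentially unweighted capacity at the Whitney scale is the delicate point --- and it is exactly there that the capacity lower bound in \ref{it.weightedhardy_c_compact_QR_intro} is consumed (for $\Omega\subseteq\R^n$ with $1<p<n$ that bound is in fact automatic, but keeping it as a hypothesis is what makes the result valid in a general metric space). The restriction $q\le\frac{np}{n-p}$ enters twice: as the only range in which \eqref{eq:(p,q)-weightedhardy_intro} can hold, and, in (L), as the range of validity of the local Sobolev inequality.
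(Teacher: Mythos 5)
Your direction \ref{it.weightedhardy_c_HS_intro}~$\Rightarrow$~\ref{it.weightedhardy_c_compact_QR_intro} is essentially the paper's argument (Lemma~\ref{eq.LowerBallweightedCapacity} and Theorem~\ref{t.necessary_qadd_weighted}): localization by Lipschitz cutoffs, the product rule, and the H\"older step converting $\bigl(d_i^{-p}\int u^p\,d(\cdot,\Omega^c)^\beta\bigr)^{q/p}$ into $\int u^q\,d\sigma$ all check out. The converse also has the right skeleton --- reduce to the Maz'ya condition (M), work one Whitney ball at a time, and split according to whether $u_{B_i}<\tfrac12$ or $u_{B_i}\ge\tfrac12$; your small-average case via the $(q,p)$-Poincar\'e--Sobolev inequality on $\lambda B_i$ is exactly what the paper does for the family $\mathcal{W}^1$, and there the capacity lower bound is indeed not consumed.

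The genuine gap is your large-average case. You need: if $u\ge 0$ is admissible for $\capp{p,\beta}(E,\Omega)$ and $u_{B_i}\ge\tfrac12$, then $\int_\Omega g_u^p\,d(\cdot,\Omega^c)^\beta\,d\mu\gtrsim\capp{p,\beta}(B_i,\Omega)$. Your proposed mechanism --- ``use that $u$ vanishes on $\Omega^c$, which is within distance $\approx d_i$ of $B_i$'' --- cannot work as a local Poincar\'e/Sobolev argument, because $\Omega^c\cap B(x_i,2d_i)$ may be negligible both in measure and in $p$-capacity (e.g.\ a single point of $\Omega^c$ realizing $d_i$, with $p<n$), so the vanishing of $u$ there imposes no lower bound on the local energy; meanwhile $\capp{p,\beta}(B_i,\Omega)$ is a \emph{global} quantity that can remain bounded below for reasons invisible near $B_i$. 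The statement you need is true, but its proof is the technical core of the paper and proceeds globally: one forms the discrete convolution $u_t$ of $u$ at a Whitney scale, shows $u_t\gtrsim u_{B_i}\ge\tfrac12$ on every ball with large average (estimate \eqref{e.m_est1}), so that $Cu_t$ is a single admissible function for the capacity of the union of all such balls, and then controls its energy by proving that $C(\M_{\Omega,\kappa}g^s)^{1/s}$ is a $p$-weak upper gradient of $u_t$ (Lemma~\ref{MaxFunctIsUpperGradient}, which is where the $(1,s)$-Poincar\'e inequality with $s<p$ enters) and that $\M_{\Omega,\kappa}$ is bounded on the weighted space $L^{p/s}(\Omega;w_\beta\,d\mu)$ (Lemma~\ref{l.maximal_bounded}). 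Without this construction --- or an equivalent substitute such as capacitary potentials as in \cite{LehrbackShanmugalingam2014} --- the estimate (L) for the balls in $\mathcal{W}^2$ is unproved, and the implication \ref{it.weightedhardy_c_compact_QR_intro}~$\Rightarrow$~\ref{it.weightedhardy_c_HS_intro} does not close. (A minor additional remark: the paper only needs the \emph{weak} quasiadditivity over finite unions of Whitney balls for this direction, precisely because the large-average balls are handled collectively through the single test function $u_t$; your per-ball version of (L) would instead lean on the full quasiadditivity for arbitrary $E\Subset\Omega$.)
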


Above, we write $E\Subset\Omega$
if the closure $\iol E$ is a compact subset of $\Omega$.
For such sets, 
the weighted relative capacity
is defined by setting
\[
\capp{p,\beta}(E,\Omega):= \inf_u \int_{\Om} \lvert \nabla u(x)\rvert^p\, d(x,\Omega^c)^{\beta}\, dx,
\]
where the infimum is taken over all
quasicontinuous representatives of $u\in W_0^{1,p}(\Omega)$
such that $u(x)\ge 1$ for all $x\in E$ and $u$ has a compact support in $\Omega$.

The case $\beta=0$ of Theorem~\ref{t.qadd_intro},
but with respect to Whitney cube decompositions,
was established in~\cite[Theorem~10.52]{KLVbook}.
The constant $1/53$ in the statement of Theorem~\ref{t.qadd_intro}
is not that essential and it has not been optimized. In fact,
it could be replaced by a larger number (up to $1/3$) by using
a further covering argument.

In general, both conditions in part~\ref{it.weightedhardy_c_compact_QR_intro} 
of Theorem~\ref{t.qadd_intro} are
needed; this is illustrated by examples in Section~\ref{sect:qreg}. However, there are
cases in which the capacity lower bound automatically holds, and therefore only the 
quasiadditivity property needs to be assumed in part~\ref{it.weightedhardy_c_compact_QR_intro}.
This happens, for instance, in the unweighted ($\beta=0$) case in $\R^n$ when $1<p<n$;
see Section~\ref{sect:qreg} for further discussion. 

In the metric space setting,
the case $q=p$, $\beta=0$ of our main result (Theorem~\ref{t.weightedhardy_c}) was
established in~\cite{LehrbackShanmugalingam2014}. The proofs in~\cite{LehrbackShanmugalingam2014} were based on potential theoretic tools
such as Harnack inequalities and the existence of capacitary potentials.
In this paper, we use a different approach which 
combines and develops ideas from~\cite{DydaVahakangas2015,HurriSyrjanenVahakangas2015,KLVbook,LuiroVahakangas2016} and
is better suited to the 
weighted ($\beta\neq 0$) case and to exponents $q\ge p$.
An important feature in
the proof of Theorem~\ref{t.weightedhardy_c} is the use of the so-called discrete convolution
as a capacity test function. 
The definition of the discrete convolution is recalled in
Section~\ref{sect:max}, where we also show that 
if $u$ belongs to the Newtonian space $N_0^{1,p}(\Omega)$,
then a local maximal function of $g_u$ 
is a $p$-weak upper gradient of the discrete convolution $u_t$.
This fact will be utilized in the proof of Theorem~\ref{t.weightedhardy_c}.
Theorem~\ref{t.weightedhardy_c} also contains a weaker variant of
the quasiadditivity property, where instead of arbitrary sets $E\subset\Omega$
the quasiadditivity is tested using only unions of Whitney balls.

If the weight $w(x)=d(x,\Omega^c)^\beta$ is $p$-admissible in the metric space $X=(X,d,\mu)$, that is,
the weighted measure $w\,d\mu$ is doubling and supports a
$(1,p)$-Poincar\'e inequality, then 
the weighted cases of our main results could be obtained
from the corresponding unweighted results applied 
in the weighted metric space $(X,d,w\,d\mu)$.
In particular, then for $q=p$ the weighted case can be directly obtained from the results in~\cite{LehrbackShanmugalingam2014}. 
Many examples of admissible weights are provided by weights from the Muckenhoupt classes, 
see~\cite[Theorem~4]{Bjorn2001} and the remark after~\cite[Theorem~7]{Bjorn2001}.
Sufficient and necessary conditions for the weight $w(x)=d(x,\Omega^c)^\beta$ to be a Muckenhoupt weight
have been given in~\cite{DydaEtAl2019} in terms of the (co)dimension of $\Omega^c$. 
However, we emphasize that our more general approach applies also
in the cases where $w$ is not admissible, and thus we obtain a unified theory for all 
weighted Hardy--Sobolev inequalities. 

In addition to the results in~\cite[Theorem~1]{LehrbackShanmugalingam2014} and~\cite[Theorem~10.52]{KLVbook},  
the quasiadditivity property has been considered 
in the Euclidean space $\R^n$ for instance in~\cite{Aikawa1991,AikawaEssen1996}
for Riesz capacities, and in~\cite{DydaVahakangas2015,LuiroVahakangas2016} ($q=p$) 
and~\cite{HurriSyrjanenVahakangas2015} ($q\ge p$)
for fractional capacities with the help of fractional Hardy(--Sobolev) inequalities.
Sufficient conditions for different versions of (weighted) 
Hardy and Hardy--Sobolev inequalities,
and hence also for the corresponding quasiadditivity, 
have been given for example in
\cite{DydaEtAl2019,KLVbook,Lehrback2017jam,LehrbackVahakangas2016,Mazya1985};
see also the references therein.

The outline for the rest of the paper is as follows.
In Section~\ref{sect:preli} we recall the necessary background on
analysis on metric spaces:
doubling measures, $p$-weak upper gradients, Poincar\'e inequalities, and Whitney covers.
Metric space variants of the
Hardy--Sobolev inequality~\eqref{eq:(p,q)-weightedhardy_intro} and the
weighted relative capacity are introduced in Section~\ref{sect:HScap}, where 
we also prove the Maz'ya type characterization
of weighted Hardy--Sobolev inequalities and show how the capacity lower bound
follows from the Hardy--Sobolev inequality. In Section~\ref{sect:max} we prepare for
our main results by proving the boundedness of a local maximal operator
and showing how this maximal operator can be used to give an upper gradient for the 
discrete convolution.
Section~\ref{sect:qadd} contains our main results on quasiadditivity, and
finally in Section~\ref{sect:qreg} we consider the special case of $Q$-regular
spaces and give examples illustrating the necessity of the conditions
in our characterizations.

As usual, we let $C$ denote positive constants whose exact value may change at each occurrence. 
By $\ch{E}$ we denote the characteristic function of a set $E\subset X$;
that is, $\ch{E}(x)=1$ if $x\in E$ and $\ch{E}(x)=0$ if $x\in X\setminus E$.

\section{Preliminaries}\label{sect:preli}

\subsection{Metric spaces with a doubling measure}
Let $X=(X,d,\mu)$ be a metric measure space. We assume for the rest of this paper that
$\mu$ is a Borel measure on $X$, with $0<\mu(B)<\infty$ whenever $B=B(x,r)$ is an open ball in $X$, and
that $\mu$ is \emph{doubling}, that is, there is a constant $C_\mu$ such that such that
\begin{equation}\label{e.doubling}
\mu(2B) \le C_\mu\, \mu(B)
\end{equation}
for all balls $B$ in $X$. 
Here we use the notation $tB=B(x,tr)$,
when  $0<t<\infty$ and $B=B(x,r)$.

By iterating the doubling condition \eqref{e.doubling}, we can find constants $Q>0$ and $C>0$ such that
\begin{equation}\label{eq:QDoublingDim}
\frac{\mu(B(y,r))}{\mu(B(x,R))}\ge C\Bigl(\frac rR\Bigr)^Q
\end{equation}
whenever $0<r\le R<\diam X$ and $y\in B(x,R)$; see \cite[Lemma~3.3]{BjornBjorn2011}.

\subsection{Upper gradients and Newtonian functions}
Let $1\le p<\infty$.
We say that a $\mu$-measurable function $g\colon X\to [0,\infty]$ is a {\em $p$-weak upper gradient}
of  $u\colon X\to [-\infty,\infty]$, 
if 
\begin{equation}\label{e.modulus}
\lvert u(\gamma(0))-u(\gamma(\ell_\gamma))\rvert \le \int_\gamma g\,ds
\end{equation}
for $p$-almost every curve $\gamma\colon [0,\ell_\gamma]\to X$; that is, there exists a non-negative Borel function 
$\rho\in L^p_{\mathrm{loc}}(X;d\mu)$ such that 
$\int_\gamma \rho\,ds=\infty$ whenever \eqref{e.modulus} does not hold or is not defined.
For a function $u\in L^p(X;d\mu)$, we denote by $\mathcal{D}^{p}(u)$ 
the set of all $p$-weak upper gradients $g\in L^p(X;d\mu)$ of $u$.
We remark that $\mathcal{D}^p(u)$ can be empty.
See~\cite{BjornBjorn2011} and~\cite{HeinonenKoskelaShanmugalingamTyson2015} 
for detailed treatments of $p$-weak upper gradients.

Using $p$-weak upper gradients as a substitute
for modulus of the weak derivative, one defines the norm
\[
  \Vert u\Vert_{N^{1,p}(X)}:= \left(\int_X|u|^p\, d\mu\, +\, \inf_g\int_X g^p\, d\mu\right)^{1/p},
\]
where $1\le p<\infty$ and the infimum is taken over all $g\in\mathcal{D}^p(u)$.  
The \emph{Newtonian space} $N^{1,p}(X)$ is
the set 
\[
  \bigl\{u \colon X \to[-\infty,\infty]\, :\, \Vert u\Vert_{N^{1,p}(X)}<\infty\bigr\}
\] 
equipped with the norm $\Vert \cdot \Vert_{N^{1,p}(X)}$. 
We assume that functions in $N^{1,p}(X)$ are defined everywhere, and
not just up to an equivalence class.

When $\Omega \subset X$ is an open set, we denote by 
$N^{1,p}_{0}(\Omega)$
the space of all Newtonian
functions on $X$ that vanish in 
the complement $\Omega^c= X \setminus \Omega$. 
Moreover, we let $N^{1,p}_{c}(\Omega)$ denote the space of all Newtonian
functions on $X$ whose support, i.e.\ the closure of the set where $u\neq 0$,
is a bounded set having a strictly positive distance to the complement $\Omega^c$.
Then $N^{1,p}_{c}(\Omega)\subset N^{1,p}_{0}(\Omega)$ and $N^{1,p}_{c}(\Omega)$ is a natural class of
test functions in weighted Hardy--Sobolev inequalities. If $X$ is complete,
it is equivalent to require that functions in $N^{1,p}_{c}(\Omega)$
have a compact support in $\Omega$, but in non-complete spaces the above definition
gives more flexibility. 

We refer to
\cite{BjornBjorn2011,HeinonenKoskelaShanmugalingamTyson2015,Shanmugalingam2000} for 
more information on Newtonian spaces. 

\subsection{Poincar\'e inequalities}
We say that the space $X=(X,d,\mu)$ supports a \emph{$(q,p)$-Poincar\'e inequality} for $1\le q,p<\infty$ if there exist constants
$C>0$ and $\lambda\ge 1$ such that
\begin{equation}\label{PoincareInequality}
    \left(\vint_{B} |u-u_B|^q\, d\mu\right)^{\frac{1}{q}} \le C\, r \left(\vint_{\lambda B}g^p\, d\mu\right)^{\frac{1}{p}}
\end{equation}
for all measurable functions $u$ in $X$ and all $p$-weak upper gradients $g$ of $u$,
where the left-hand side of \eqref{PoincareInequality}
is defined to be $\infty$, if the mean value integral
\[
   u_B:=\frac{1}{\mu(B)}\, \int_B u\, d\mu =:\vint_{B}u\, d\mu
\]
is not defined.

The constant $\lambda$ above is called the dilatation constant for the $(q,p)$-Poincar\'e inequality.
The $(1,p)$-Poincar\'e inequality 
for $1\le p<Q$, where $Q$ is as in \eqref{eq:QDoublingDim},  
together with the doubling condition \eqref{e.doubling} implies a $(q,p)$-Poincar\'e  inequality
with $q=Qp/(Q-p)$; see~\cite{BjornBjorn2011,HajlaszKoskela2000}.

If $X$ is complete and supports a $(1,p)$-Poincar\'e inequality for some $1<p<\infty$, then there exists $1<s<p$ such that 
$X$ supports a $(1,s)$-Poincar\'e inequality; see \cite{KeithZhong2008}. Since we do not require the completeness from $X$ in 
this paper, we can not refer to this self-improvement result and 
hence in some of our results we directly assume that $X$ supports a $(1,s)$-Poincar\'e inequality for some 
$1<s<p$.

\subsection{Whitney covers} Let $\Omega\subsetneq X$ be an open set.
When $x\in\Omega$, we denote by 
$d(x,\Omega^c)=\dist(x,\Omega^c)$
the distance from $x\in \Omega$ 
to the complement $\Omega^c:=X\setminus\Omega$.

Let $0<c< 1/3$. 
There exists a countable family
${\mathcal W}_c(\Omega)=\{ B_i : i\in \N \}$ of balls $B_i=B(x_i,r_i)$, 
$r_i=c\,d(x_i,\Omega^c)$, such that
${\mathcal W}_c(\Omega)$ is a cover of $\Omega$
and the balls $B_i$ have a uniformly bounded
overlap, that is, there
exists $1\le C<\infty$ such that
\[
1\le\sum_{i=1}^\infty \ch{B_i}(x)\le C,
\]
for every $x\in\Omega$.
The collection ${\mathcal W}_c(\Omega)$ of
\emph{Whitney balls $B_i$} is called a 
\emph{Whitney cover} of $\Omega$.

The following lemma collects useful properties of Whitney covers. 
Proofs of properties \ref{whitney cover}, \ref{whitney inside U} and \ref{whitney distance to U} 
easily follow from the definition of balls $B_i$ and $B_i^*$, 
and property \ref{whitney overlap} is proved, for instance, in~\cite{BjornBjornShanmugalingam2007}.

\begin{lemma}\label{whitney}
Let $\Omega\subsetneq X$ be an open set and let $L> 1$. 
Suppose ${\mathcal W}_c(\Omega)=\{ B_i : i\in \N\}$ is a family of Whitney balls as above, with $c\le (3L)^{-1}$, and define $B_i^*:=LB_i$. Then the following assertions hold:
\begin{enumerate}[label=\textup{(\roman*)}]
\item
\label{whitney cover}
$\Omega=\bigcup_{i\in \N} B_i$,
\item
\label{whitney inside U}
$B_i^*\subset \Omega$, for every $i\in\N$,
\item
\label{whitney distance to U}
if $x\in B_i^*$, then $(1/c-L)r_i\le d(x,\Omega^c) \le (1/c+L)r_i$,
\item
\label{whitney overlap}
there is $M\in\N$ such that $\sum_{i\in \N}\ch{B_i^*}(x)\le M$ for all $x\in \Omega$.
\end{enumerate}
\end{lemma}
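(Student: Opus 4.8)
The statement to prove is Lemma~\ref{whitney}, which collects four standard properties of Whitney covers. The plan is to verify each assertion in turn, using only the definition of the Whitney balls $B_i = B(x_i, r_i)$ with $r_i = c\,d(x_i,\Omega^c)$, the hypothesis $c \le (3L)^{-1}$, and the uniformly bounded overlap built into the definition of ${\mathcal W}_c(\Omega)$.

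First, assertion~\ref{whitney cover} is immediate: it is part of the defining property of a Whitney cover that $\Omega = \bigcup_{i} B_i$, so nothing needs to be done beyond recalling this. For assertion~\ref{whitney inside U}, fix $i$ and let $y \in B_i^* = LB_i = B(x_i, Lr_i)$. Then
\[
d(y,\Omega^c) \ge d(x_i,\Omega^c) - d(y,x_i) > d(x_i,\Omega^c) - Lr_i = (1 - Lc)\,d(x_i,\Omega^c) \ge 0,
\]
since $Lc \le 1/3 < 1$; hence $y \in \Omega$, and as $y$ was arbitrary, $B_i^* \subset \Omega$. For assertion~\ref{whitney distance to U}, the same triangle-inequality estimates, now applied in both directions, give for $x \in B_i^*$
\[
d(x,\Omega^c) \le d(x_i,\Omega^c) + Lr_i = \Bigl(\tfrac1c + L\Bigr) r_i, \qquad
d(x,\Omega^c) \ge d(x_i,\Omega^c) - Lr_i = \Bigl(\tfrac1c - L\Bigr) r_i,
\]
using $d(x_i,\Omega^c) = r_i/c$; note $1/c - L > 0$ because $c \le (3L)^{-1}$ forces $1/c \ge 3L > L$.

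The only assertion requiring more than direct computation is~\ref{whitney overlap}, the bounded overlap of the dilated balls $B_i^*$. The standard argument runs as follows: fix $x \in \Omega$ and consider all indices $i$ with $x \in B_i^*$. By~\ref{whitney distance to U}, each such $r_i$ is comparable to $d(x,\Omega^c)$, say $c_1 d(x,\Omega^c) \le r_i \le c_2 d(x,\Omega^c)$ with constants depending only on $c$ and $L$; consequently all the balls $B_i^*$ containing $x$ have comparable radii and are contained in a fixed ball $B(x, Rd(x,\Omega^c))$ for a suitable $R = R(c,L)$, while each contains a ball of radius $c_1 d(x,\Omega^c)$ around its center $x_i$, and the centers $x_i$ have pairwise distance bounded below in terms of the original bounded overlap of the $B_i$. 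A volume-counting argument via the doubling property~\eqref{e.doubling} (or, more cleanly, directly from the uniformly bounded overlap constant of ${\mathcal W}_c(\Omega)$ together with a comparison of measures $\mu(B_i^*) \lesssim \mu(B_i)$) then yields a bound $M = M(C_\mu, c, L)$ on the number of such indices. This is the step with the most moving parts, though it is entirely routine; for the write-up I would simply cite~\cite{BjornBjornShanmugalingam2007}, as the lemma's preamble already indicates, and present the computations for~\ref{whitney cover}--\ref{whitney distance to U} in the compressed form above.
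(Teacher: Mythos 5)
Your proposal is correct and follows essentially the same route as the paper, which likewise treats \ref{whitney cover}--\ref{whitney distance to U} as direct consequences of the definitions ($r_i = c\,d(x_i,\Omega^c)$ plus the triangle inequality) and refers to \cite{BjornBjornShanmugalingam2007} for the bounded overlap in \ref{whitney overlap}. Your triangle-inequality computations for \ref{whitney inside U} and \ref{whitney distance to U} are accurate, and the volume-counting sketch for \ref{whitney overlap} (comparable radii, containment in a fixed ball, doubling, and the overlap constant of the undilated cover) is the standard argument the cited reference carries out.
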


\section{Hardy--Sobolev inequality and relative capacity}\label{sect:HScap}

In $\R^n$, the Hardy--Sobolev inequality~\eqref{eq:(p,q)-weightedhardy_intro}
is written in terms of powers of the distance $d(x,\Omega^c)$. In the metric case, it turns out to be
natural to interpret the part $d(x,\Omega^c)^{n(q-p)/p}$ as the measure
of the ball $B(x,d(x,\Omega^c))$ to the power $(q-p)/p$. This leads to the following
formulation, which in the case $X=\R^n$ is equivalent to~\eqref{eq:(p,q)-weightedhardy_intro}.
See also Section~\ref{sect:qreg} for the case of $Q$-regular metric spaces.

Let $\Omega\subsetneq X$ be an open set.
We say that a $(q,p,\beta)$-Hardy--Sobolev inequality holds in $\Omega$,
for $1\le p,q<\infty$ and $\beta\in\R$,
if there exists a constant $C>0$ such that the inequality
\begin{equation}\label{eq:(p,q)-weightedhardy}
\biggl(\int_{\Omega} \frac{|u(x)|^q}{d(x,\Omega^c)^{\frac{q}{p}(p-\beta)}}\mu(B(x,d(x,\Omega^c)))^\frac{q-p}{p}\,d\mu(x)\biggr)^{\frac{1}{q}}
   \leq C\biggl(\int_{\Omega} g(x)^p \,d(x,\Omega^c)^{\beta}\,d\mu(x)\biggr)^{\frac{1}{p}}
\end{equation}
is valid for all $u\in N_c^{1,p}(\Omega)$
and for all $g\in\mathcal{D}^p(u)$.

We write $E\Subset\Omega$ if the set $E$ is bounded
and $d(E,\Omega^c)>0$, that is, $E$ has a strictly positive distance to the complement $\Omega^c$.
The \emph{relative $(p,\beta)$-capacity} of a set $E\Subset\Omega$,
for $1 \le p<\infty$  and $\beta\in\R$, is defined to be the number
\[
\capp{p,\beta}(E,\Omega):= \inf_u\inf_{g} \int_{\Om} g(x)^p\, d(x,\Omega^c)^{\beta}\, d\mu(x),
\]
where the infimum is taken over all $u\in N_c^{1,p}(\Omega)$ such that $u(x)\ge 1$ for all $x\in E$, and over all $g\in\mathcal{D}^p(u)$. A function $u$ satisfying the above conditions is called a \emph{capacity test function} for $E$. 
Observe that if $E\Subset\Omega$, then there exist capacity test functions for $E$ and 
thus $\capp{p,\beta}(E,\Omega)<\infty$. 
For instance, we can test the capacity with the Lipschitz function 
\[
\varphi(x)=\max\left\{0,1-\frac{2d(x,E)}{d(E,\Omega^c)}\right\}, \qquad x\in X.
\]

The following Maz'ya-type characterization 
is one manifestation of the close connection
between Hardy--Sobolev inequalities
and relative capacities; see \cite{Mazya1985} for the origins of this kind of characterizations.
 
\begin{thm}\label{t.mazya_char_weighted_qp}
Let $1\le p\le q<\infty$, $\beta\in\mathbb{R}$ and let $\Omega\subsetneq X$ be an open set. The $(q,p,\beta)$-Hardy--Sobolev inequality holds in $\Omega$
if and only if there is a constant $C_1$ such that
\begin{equation}\label{eq:mazya and weighted hardy}
 \int_E  \frac{\mu(B(x,d(x,\Omega^c)))^\frac{q-p}{p}}{d(x,\Omega^c)^{\frac{q}{p}(p-\beta)}}\,d\mu(x) \leq C_1 \capp{p,\beta}(E,\Omega)^{\frac{q}{p}},
\end{equation}
for all $E\Subset\Omega$.
\end{thm}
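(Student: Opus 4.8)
The plan is to establish the two implications separately, with the forward direction (Hardy--Sobolev $\Rightarrow$ capacitary estimate) being essentially immediate and the converse requiring a genuine argument via a stopping-time or truncation scheme on the level sets of $u$.

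\emph{From the Hardy--Sobolev inequality to \eqref{eq:mazya and weighted hardy}.}
Fix $E\Subset\Omega$ and let $u\in N_c^{1,p}(\Omega)$ be any capacity test function for $E$, with $g\in\mathcal D^p(u)$. Since $u(x)\ge 1$ on $E$, we may estimate the left-hand side of \eqref{eq:mazya and weighted hardy} by replacing $1$ with $|u(x)|^q$ on $E$ and then enlarging the domain of integration to all of $\Omega$; this produces exactly the left-hand side of the $(q,p,\beta)$-Hardy--Sobolev inequality \eqref{eq:(p,q)-weightedhardy}. Applying that inequality, we obtain
\[
\int_E \frac{\mu(B(x,d(x,\Omega^c)))^{\frac{q-p}{p}}}{d(x,\Omega^c)^{\frac{q}{p}(p-\beta)}}\,d\mu(x)
\le C^q\biggl(\int_\Omega g(x)^p\,d(x,\Omega^c)^\beta\,d\mu(x)\biggr)^{\frac qp}.
\]
Taking the infimum over all admissible $u$ and $g$ gives \eqref{eq:mazya and weighted hardy} with $C_1=C^q$.

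\emph{From \eqref{eq:mazya and weighted hardy} to the Hardy--Sobolev inequality.}
Here the standard Maz'ya machinery applies. Let $u\in N_c^{1,p}(\Omega)$ and $g\in\mathcal D^p(u)$; by considering $|u|$ we may assume $u\ge 0$. For $j\in\Z$ set $E_j=\{x\in\Omega:u(x)>2^j\}$. Each $E_j$ is (after intersecting with a slightly shrunk set, using that $u$ has compact support away from $\Omega^c$) a set with $E_j\Subset\Omega$, and the truncated function $u_j=\min\{1,\max\{0,2^{-j}u-1\}\}$ is, up to a normalizing factor, a capacity test function for $\ol{E_{j+1}}$ relative to $\Omega$, with $2^{-j}g\,\ch{E_j\setminus E_{j+1}}$ serving as an upper gradient supported where $2^j<u\le 2^{j+1}$. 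This yields the capacitary bound
\[
\capp{p,\beta}(E_{j+1},\Omega)\le 2^{-jp}\int_{E_j\setminus E_{j+1}} g(x)^p\,d(x,\Omega^c)^\beta\,d\mu(x).
\]
Decomposing the left-hand side of \eqref{eq:(p,q)-weightedhardy} over the dyadic annuli $E_j\setminus E_{j+1}$, where $|u(x)|^q\le 2^{(j+1)q}$, bounding the resulting sum by $\sum_j 2^{(j+1)q}\,w_\beta(E_j\setminus E_{j+1})\le \sum_j 2^{(j+1)q}\,w_\beta(E_j)$ where $w_\beta$ denotes the measure appearing on the left of \eqref{eq:mazya and weighted hardy}, and then invoking hypothesis \eqref{eq:mazya and weighted hardy} in the form $w_\beta(E_j)\le C_1\capp{p,\beta}(E_j,\Omega)^{q/p}$, reduces matters to the discrete inequality
\[
\Bigl(\sum_j 2^{jq}\capp{p,\beta}(E_j,\Omega)^{q/p}\Bigr)^{1/q}\le C\Bigl(\sum_j 2^{jp}\capp{p,\beta}(E_j,\Omega)\Bigr)^{1/p},
\]
which holds for $q\ge p$ by the elementary embedding $\ell^p\hookrightarrow\ell^q$ applied to the sequence $\bigl(2^{jp}\capp{p,\beta}(E_j,\Omega)\bigr)_j$. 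Combining with the capacitary bound above and a reindexing absorbs everything into $\int_\Omega g^p\,d(x,\Omega^c)^\beta\,d\mu$, giving \eqref{eq:(p,q)-weightedhardy}.

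\emph{Main obstacle.}
The delicate point is not the dyadic decomposition itself but the verification that the truncations $u_j$ are \emph{admissible} test functions for the relative capacity: one must check that $u_j\in N_c^{1,p}(\Omega)$ (it inherits the compact-support-away-from-$\Omega^c$ property from $u$, and truncation preserves Newtonian membership and upper gradients), that the level sets can be taken to satisfy $E_j\Subset\Omega$ rather than merely $E_j\subset\Omega$, and that $u_j\ge 1$ on $\ol{E_{j+1}}$ after passing to a quasicontinuous representative. The condition $q\ge p$ enters exactly once and only through the $\ell^p\hookrightarrow\ell^q$ step; the case $q=p$ is the classical Maz'ya equivalence, and the extra summation bookkeeping for $q>p$ is routine once the capacitary estimate on each level set is in hand.
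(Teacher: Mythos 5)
Your proposal is correct and follows essentially the same route as the paper: the forward direction via testing the Hardy--Sobolev inequality with capacity test functions, and the converse via the dyadic level sets $E_j=\{|u|>2^j\}$, the truncations $u_j$ with upper gradient $2^{-j}g\ch{E_j\setminus E_{j+1}}$ (justified in the paper by the gluing lemma), and the $\ell^{1}\hookrightarrow\ell^{q/p}$ summation step where $q\ge p$ enters. The admissibility concerns you flag are handled more simply in the paper: since $u\in N_c^{1,p}(\Omega)$ has bounded support at positive distance from $\Omega^c$, each $E_j$ is automatically $\Subset\Omega$ in the paper's sense (bounded with positive distance to the complement, no closedness or shrinking needed), and the test condition is pointwise $u_j\ge 1$ on $E_{j+1}$ itself rather than on its closure.
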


\begin{proof} The main lines of the proof follow the proof of \cite[Theorem~4.1]{KorteShanmugalingam2010} where a similar characterization is obtained for the validity of the $p$-Hardy inequality,  that is the case $p=q$ and $\beta=0$.

First assume that $(q,p,\beta)$-Hardy--Sobolev inequality \eqref{eq:(p,q)-weightedhardy} holds in $\Omega$, with a constant $C>0$.
Let $E\Subset\Omega$,  
and let $u\in N_c^{1,p}(\Omega)$ be such that $u(x)\ge 1$, for every $x\in E$.
Then
\begin{equation}\label{e.applies}
\begin{split}
\int_E\frac{\mu(B(x,d(x,\Omega^c)))^\frac{q-p}{p}}{d(x,\Omega^c)^{\frac{q}{p}(p-\beta)}}\,d\mu(x)
&\leq \int_\Omega
 \frac{|u(x)|^q}{d(x,\Omega^c)^{\frac{q}{p}(p-\beta)}}\mu(B(x,d(x,\Omega^c)))^\frac{q-p}{p}\,d\mu(x)\\
&\leq
  C^q\biggl(\int_\Omega g(x)^p \,d(x,\Omega^c)^{\beta}\,d\mu(x)\biggr)^\frac{q}{p},
\end{split}
\end{equation}
where $g\in\mathcal{D}^p(u)$.
By taking infimum over all $g\in\mathcal{D}^p(u)$, and then over all functions $u$ as above,
we obtain~\eqref{eq:mazya and weighted hardy} with $C_1=C^q$.

Then assume that inequality~\eqref{eq:mazya and weighted hardy} holds with a constant $C_1$ for all sets
$E\Subset\Omega$. Let $u\in N_c^{1,p}(\Omega)$ and 
$g\in\mathcal{D}^p(u)$. 
For $j\in\Z$, define
\[
E_j=\{x\in \Omega : |u(x)|>2^j\}.
\]
Since $E_j\Subset\Omega$ for every $j\in\Z$,
by~\eqref{eq:mazya and weighted hardy} we have
\begin{align*}
&\int_\Omega\frac{|u(x)|^q}{d(x,\Omega^c)^{\frac{q}{p}(p-\beta)}}\mu(B(x,d(x,\Omega^c)))^\frac{q-p}{p}\,d\mu(x)\\
&\qquad\leq \sum_{j=-\infty}^\infty 2^{(j+2)q}\int_{E_{j+1}\setminus E_{j+2}}\frac{\mu(B(x,d(x,\Omega^c)))^\frac{q-p}{p}}{d(x,\Omega^c)^{\frac{q}{p}(p-\beta)}}\,d\mu(x)\\
&\qquad\leq \sum_{j=-\infty}^\infty 2^{(j+2)q}\int_{E_{j+1}}\frac{\mu(B(x,d(x,\Omega^c)))^\frac{q-p}{p}}{d(x,\Omega^c)^{\frac{q}{p}(p-\beta)}}\,d\mu(x)\\
&\qquad\leq 4^q C_1\sum_{j=-\infty}^\infty 2^{jq}\capp{p,\beta}(E_{j+1},\Omega)^\frac{q}{p}.
\end{align*}
For every $j\in\Z$, define $u_j\colon X\to[0,1]$ by
\begin{equation*}
u_j(x)=\begin{cases}
1, & \textrm{if }|u(x)|\geq 2^{j+1},\\
2^{-j}|u(x)|-1, & \textrm{if } 2^{j}<|u(x)|<2^{j+1},\\
0,&\textrm{if }|u(x)|\leq 2^{j}.
\end{cases}
\end{equation*}
Then $u_j\in N_c^{1,p}(\Omega)$ and $u_j=1$ in $E_{j+1}$. From the gluing lemma \cite[Lemma~2.19]{BjornBjorn2011} we obtain that
$g_{j}=2^{-j}g\ch{E_j\setminus E_{j+1}}\in \mathcal{D}^p(u_j)$.
Using $u_j$ as a test function for the weighted capacity $\capp{p,\beta}(E_{j+1},\Omega)$, we obtain
\begin{align*}
\capp{p,\beta}(E_{j+1},\Omega)&\le \int_{\Omega} g_{j}(x)^p\,d(x,\Omega^c)^{\beta}\,d\mu(x)\\
&= \int_{E_{j}\setminus E_{j+1}} g_{j}(x)^p\,d(x,\Omega^c)^{\beta}\,d\mu(x)\le 2^{-jp}
\int_{E_{j}\setminus E_{j+1}} g(x)^p\,d(x,\Omega^c)^{\beta}\,d\mu(x).
\end{align*}
Since $q/p\geq 1$, it follows that
\begin{align*}
\sum_{j=-\infty}^\infty 2^{jq}\capp{p,\beta}(E_{j+1},\Omega)^\frac{q}{p} 
&\le \sum_{j=-\infty}^\infty\biggl(\int_{E_{j}\setminus E_{j+1}} g(x)^p\,d(x,\Omega^c)^{\beta}\,d\mu(x)\biggr)^\frac{q}{p}\\
&\leq \Bigg(\sum_{j=-\infty}^\infty\int_{E_{j}\setminus E_{j+1}}  g(x)^p\,d(x,\Omega^c)^{\beta}\,d\mu(x)\Biggr)^\frac{q}{p}\\
&=\biggl(\int_\Omega g(x)^p\,d(x,\Omega^c)^{\beta}\,d\mu(x)\biggr)^\frac{q}{p}.
\end{align*}
This shows that the $(q,p,\beta)$-Hardy--Sobolev inequality holds with the constant $C=4C_1^{1/q}$ for every function 
$u\in N_c^{1,p}(\Omega)$ and for every $g\in\mathcal{D}^p(u)$.
\end{proof}

Let $B=B(x,r)\in \mathcal{W}_c(\Omega)$, $0<c<1/3$, be a Whitney ball defined as above and set $B^*=LB$ with $L=1/(3c)$. 
In this case $B^*\Subset\Omega$ and we can take the Lipschitz function 
\[
\varphi(x)=\max\left\{0,1-\frac{d(x,B)}{d(B,X\setminus B^*)}\right\}, \qquad x\in X,
\]
as a test function for the capacity $\capp{p,\beta}(B,\Omega)$. Then, by the properties of Whitney balls, 
see Lemma~\ref{whitney}, and by the doubling condition for measure $\mu$, it follows that
there exists a constant $C$, depending on the parameter $c$ 
and the doubling constant $C_\mu$, such that
\[
\capp{p,\beta}(B,\Omega)\le C\mu(B)r^{\beta-p},
\]
for every ball $B=B(x,r)\in \mathcal{W}_c(\Omega)$. 

By the following lemma, if a $(q,p,\beta)$-Hardy--Sobolev inequality holds in $\Omega$, then the corresponding lower bound for the capacity is valid as well, and, therefore, the value of $\capp{p,\beta}(B,\Omega)$ is comparable to $\mu(B)r^{\beta-p}$
for every ball $B=B(x,r)\in \mathcal{W}_c(\Omega)$. 

\begin{lemma}\label{eq.LowerBallweightedCapacity}
Let 
$1\le p, q<\infty$ and $\beta\in\R$, and assume that a $(q,p,\beta)$-Hardy--Sobolev inequality~\eqref{eq:(p,q)-weightedhardy} holds in an open set $\Omega\subsetneq X$. Then
there exists a constant $C>0$ such that
\begin{equation}\label{eq.weightedcap_lower_w}
\capp{p,\beta}(B,\Omega) \ge C \mu(B)r^{\beta-p}
\end{equation}
for every ball $B=B(x,r) \in\mathcal{W}_c(\Omega)$.
\end{lemma}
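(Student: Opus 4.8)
The plan is to test the $(q,p,\beta)$-Hardy--Sobolev inequality~\eqref{eq:(p,q)-weightedhardy} with an arbitrary capacity test function for $B$ and to use the elementary observation that on a Whitney ball both the distance weight and the measure term on the left-hand side of~\eqref{eq:(p,q)-weightedhardy} are essentially constant.

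Fix $B=B(x_0,r)\in\mathcal{W}_c(\Omega)$, so that $r=c\,d(x_0,\Omega^c)$ with $0<c<1/3$; in particular $d(B,\Omega^c)\ge (1/c-1)r>0$, hence $B\Subset\Omega$ and $\capp{p,\beta}(B,\Omega)$ is well defined. First I would record the geometric estimates, all with constants depending only on $c$ and the doubling constant $C_\mu$: for every $x\in B$ one has
\[
(1/c-1)r\le d(x,\Omega^c)\le (1/c+1)r,
\]
and moreover $B\subset B(x,d(x,\Omega^c))\subset B(x_0,(2+1/c)r)$, so that by iterating~\eqref{e.doubling},
\[
\mu(B)\le \mu\bigl(B(x,d(x,\Omega^c))\bigr)\le C\mu(B).
\]
Since these comparabilities are two-sided, for \emph{any} fixed real exponents $s$ and $t$ we have $d(x,\Omega^c)^{s}\simeq r^{s}$ and $\mu(B(x,d(x,\Omega^c)))^{t}\simeq \mu(B)^{t}$ uniformly for $x\in B$, regardless of the signs of $s$ and $t$.

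Now let $u\in N_c^{1,p}(\Omega)$ be any capacity test function for $B$, i.e.\ $u\ge 1$ on $B$, and let $g\in\mathcal{D}^p(u)$. Applying~\eqref{eq:(p,q)-weightedhardy} to $u$ and $g$, discarding the part of the (nonnegative) left-hand integrand over $\Omega\setminus B$, using $|u|\ge 1$ on $B$, and then inserting the pointwise comparabilities above together with $\mu(B(x_0,r))=\mu(B)$, I would obtain
\begin{align*}
C\biggl(\int_\Omega g(x)^p\,d(x,\Omega^c)^{\beta}\,d\mu(x)\biggr)^{1/p}
&\ge \biggl(\int_B \frac{\mu(B(x,d(x,\Omega^c)))^{\frac{q-p}{p}}}{d(x,\Omega^c)^{\frac qp(p-\beta)}}\,d\mu(x)\biggr)^{1/q}\\
&\ge c_1\Bigl(\mu(B)^{\frac{q-p}{p}}\,r^{-\frac qp(p-\beta)}\,\mu(B)\Bigr)^{1/q}
= c_1\,\mu(B)^{1/p}\,r^{\frac{\beta-p}{p}},
\end{align*}
with $c_1>0$ depending only on $p,q,\beta,c,C_\mu$. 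Raising both sides to the power $p$ gives $\int_\Omega g^p\,d(\cdot,\Omega^c)^{\beta}\,d\mu\ge (c_1/C)^p\,\mu(B)\,r^{\beta-p}$, and taking the infimum over all admissible $u$ and all $g\in\mathcal{D}^p(u)$ yields~\eqref{eq.weightedcap_lower_w}.

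There is no serious obstacle here; the only point that needs a little care is to verify that the Whitney geometry indeed supplies \emph{two-sided} bounds for both $d(x,\Omega^c)$ and $\mu(B(x,d(x,\Omega^c)))$ uniformly for $x\in B$ — this is exactly what allows the exponents $\tfrac qp(p-\beta)$ and $\tfrac{q-p}{p}$, whose signs are not controlled in general, to be absorbed into the constant $c_1$.
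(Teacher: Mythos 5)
Your proposal is correct and follows essentially the same route as the paper: the paper also tests the Hardy--Sobolev inequality with a capacity test function for $B$ (this is the estimate \eqref{e.applies} specialized to $E=B$) and then uses the Whitney-ball comparabilities $d(x,\Omega^c)\simeq r$ and $\mu(B(x,d(x,\Omega^c)))\simeq\mu(B)$ to extract $\bigl(\mu(B)r^{\beta-p}\bigr)^{q/p}$ from the left-hand integral. Your explicit remark that the comparabilities must be two-sided because the signs of the exponents are not controlled is a point the paper leaves implicit, but the argument is the same.
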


\begin{proof}
By applying the estimate in~\eqref{e.applies} (which holds also if $q<p$)
and the property \ref{whitney distance to U} of Whitney balls, we have
\begin{align*}
\capp{p,\beta}(B,\Omega)^{\frac{q}{p}}&\ge C\int_B  \frac{\mu(B(y,d(y,\Omega^c)))^\frac{q-p}{p}}{d(y,\Omega^c)^{\frac{q}{p}(p-\beta)}}\,d\mu(y)\\
&\ge C \int_B  \mu(B)^\frac{q-p}{p}r^{\frac{q}{p}(\beta-p)}\,d\mu(y)=C\big(\mu(B)r^{\beta-p}\big)^\frac{q}{p}.
\end{align*}
The claim follows by raising this to power $p/q>0$.
\end{proof}

\section{Local maximal functions and discrete convolutions}\label{sect:max}

Let $\Omega\subsetneq X$ be an open set and let $0<\kappa<1$.
For a measurable function $f$ on $\Omega$, define the \emph{local maximal function}
\[
 {\M}_{\Omega,\kappa}f(x) =
\sup_{0<r<\kappa d(x,\Omega^c)} \vint_{B(x,r)} \lvert f\rvert\,d\mu,\qquad x\in \Omega.
\]
For convenience, we set ${\M}_{\Omega,\kappa}f(x)=0$ if $x\in X\setminus\Omega$.
Recall that the usual (centered)
Hardy--Littlewood maximal operator $\M$ is defined for all $x\in X$ 
by using the integral averages as in the definition of
${\M}_{\Omega,\kappa}$ but omitting the upper bound $r<\kappa d(x,\Omega^c)$
for the radii. Hence
${\M}_{\Omega,\kappa}f(x)\le \M f(x)$
whenever $f$ is measurable on $X$ and $x\in X$.

It is well known that $\M$ is bounded on $L^s(X,d\mu)$ for all $1<s<\infty$; 
see, for instance, \cite[Section~3.2]{BjornBjorn2011}.
The next lemma gives a similar boundedness
result for the local maximal operator ${\M}_{\Omega,\kappa}$
on the weighted space $L^s(\Omega;w_\beta\,d\mu)$,
when $1<s<\infty$, $0<\kappa<1/5$
and $w_\beta(x)=d(x,\Omega^c)^\beta$
is a distance weight, for $\beta\in\R$ and $x\in \Omega$.

\begin{lemma}\label{l.maximal_bounded}
Let $0<\kappa<1/5$ and $1<s<\infty$, and let $\Omega\subsetneq X$ be an open set. 
Let $\beta\in\R$ and define $w_\beta(x)=d(x,\Omega^c)^\beta$, for $x\in\Omega$.
Then ${\M}_{\Omega,\kappa}$ is
bounded on $L^s(\Omega;w_\beta\,d\mu)$, that is, there is a constant 
$C$ such that
\[
\int_{\Omega} \bigl({\M}_{\Omega,\kappa} f\bigr)^{s} w_\beta\,d\mu
\le C\int_{\Omega} \lvert f\rvert^s w_\beta\,d\mu,
\]
for every $f\in L^s(\Omega;w_\beta\,d\mu)$. 
\end{lemma}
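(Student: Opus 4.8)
The plan is to exploit the \emph{locality} of $\M_{\Omega,\kappa}$: since the radii in its definition are constrained by $r<\kappa\, d(x,\Omega^c)$, each average $\vint_{B(x,r)}\lvert f\rvert\,d\mu$ only "sees" a Whitney-scale neighborhood of $x$, on which the distance weight $w_\beta$ is comparable to a constant. This reduces the weighted estimate to the classical unweighted $L^s$-boundedness of the Hardy--Littlewood maximal operator $\M$, combined with the bounded overlap of a Whitney cover. It is worth stressing that $w_\beta$ need not be a Muckenhoupt weight for large $\lvert\beta\rvert$, so one cannot simply invoke weighted maximal function theory; the role of the hypothesis $\kappa<1/5$ is precisely to make the localization work.

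\textbf{Step 1: Localization to Whitney balls.} Fix a Whitney cover $\mathcal{W}_c(\Omega)=\{B_i=B(x_i,r_i)\}$ with $c=1/9$ and set $B_i^*:=3B_i$, so that Lemma~\ref{whitney} applies with $L=3$; in particular the $B_i$ cover $\Omega$, $B_i^*\subset\Omega$ for each $i$, and the $B_i^*$ have bounded overlap $M$. A short triangle-inequality computation shows that if $x\in B_i$ and $0<r<\kappa\, d(x,\Omega^c)$, then $d(x,\Omega^c)<r_i(1+1/c)$ and hence $B(x,r)\subset B(x_i,(1+\kappa+\kappa/c)r_i)\subset B_i^*$, where the needed inequality $1+\kappa+\kappa/c=1+10\kappa<3=L$ holds because $\kappa<1/5$. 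Consequently $\vint_{B(x,r)}\lvert f\rvert\,d\mu=\vint_{B(x,r)}\lvert f\ch{B_i^*}\rvert\,d\mu\le \M(f\ch{B_i^*})(x)$ for every admissible $r$, and therefore
\[
\M_{\Omega,\kappa}f(x)\ \le\ \M\bigl(f\ch{B_i^*}\bigr)(x)\qquad\text{for all }x\in B_i,
\]
where $f\ch{B_i^*}$ is regarded as a function on $X$, vanishing outside $B_i^*\subset\Omega$.

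\textbf{Step 2: Reassembling with the weight.} By Lemma~\ref{whitney}\ref{whitney distance to U}, the value $d(x,\Omega^c)$ is comparable to $r_i$ for all $x\in B_i^*$, so $w_\beta(x)=d(x,\Omega^c)^\beta$ is comparable to $r_i^\beta$ on $B_i^*$, with comparison constants depending only on $\beta$. Using $\ch{\Omega}\le\sum_i\ch{B_i}$, Step 1, this weight comparison in both directions, the $L^s(X,d\mu)$-boundedness of $\M$ for $1<s<\infty$, and finally Lemma~\ref{whitney}\ref{whitney overlap}, one obtains
\begin{align*}
\int_{\Omega}(\M_{\Omega,\kappa}f)^{s} w_\beta\,d\mu
&\le \sum_{i}\int_{B_i}\bigl(\M(f\ch{B_i^*})\bigr)^{s}w_\beta\,d\mu
\ \le\ C\sum_{i}r_i^{\beta}\int_{X}\bigl(\M(f\ch{B_i^*})\bigr)^{s}\,d\mu \\
&\le C\sum_{i}r_i^{\beta}\int_{B_i^*}\lvert f\rvert^{s}\,d\mu
\ \le\ C\sum_{i}\int_{B_i^*}\lvert f\rvert^{s} w_\beta\,d\mu
\ \le\ CM\int_{\Omega}\lvert f\rvert^{s} w_\beta\,d\mu,
\end{align*}
which is the claimed bound.

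The only genuinely delicate point is the choice of parameters in Step 1: one must check that the localization radius $\kappa\, d(x,\Omega^c)$ is small enough, relative to the Whitney scale $r_i$, that $B(x,r)$ stays inside the fixed dilate $B_i^*$ for every $x\in B_i$, while simultaneously keeping $c\le(3L)^{-1}$ so that Lemma~\ref{whitney} is available. This is exactly where the assumption $\kappa<1/5$ is used; once the geometry is set up, the rest is routine bookkeeping with the doubling constant.
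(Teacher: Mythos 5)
Your proposal is correct and follows essentially the same route as the paper's proof: a Whitney cover with $c=1/9$ and $L=3$, the observation that $\kappa<1/5$ forces $B(x,\kappa d(x,\Omega^c))\subset B_i^*$ for $x\in B_i$, comparability of $w_\beta$ with $r_i^\beta$ on $B_i^*$, the unweighted $L^s$-boundedness of $\M$, and the bounded overlap of the dilated Whitney balls. The parameter bookkeeping in your Step 1 matches the paper's computation exactly.
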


\begin{proof} 
Let ${\mathcal W}_c(\Omega)=\{ B_i:i\in \N\}$ 
be a Whitney cover of $\Omega$, with $c=1/9$,
and take $L=3$.
Suppose $x\in B_i=B(x_i,r_i)$. If $y\in B(x,\kappa d(x,\Omega^c))$, then
\[
d(y,x_i)\le r_i+\kappa d(x,\Omega^c) < r_i+\tfrac{1}{5}(r_i+9r_i)=3r_i,
\]
which implies that
$
B(x,\kappa d(x,\Omega^c))\subset B(x_i,3r_i)= B_i^*.
$
Hence, by the definition of the local maximal function, for every $x\in B_i$, 
\[
{\M}_{\Omega,\kappa}(f)(x)={\M}_{\Omega,\kappa}(f\ch{B_i^*})(x)
\le {\M}(f\ch{B_i^*})(x).
\]
This observation, together with the properties of Whitney balls
and the boundedness of $\M$ on $L^s(X,d\mu)$, gives
\begin{align*}
&\int_\Omega ({\M}_{\Omega,\kappa} f(x))^s w_\beta(x) d\mu(x)
\le \sum_i \int_{B_i} ({\M}_{\Omega,\kappa} f(x))^s d(x,\Omega^c)^\beta d\mu(x)\\
&\qquad\le C\sum_i r_i^\beta \int_{B_i} ({\M}(f\ch{B_i^*})(x))^s d\mu(x)
\le C\sum_i r_i^\beta \int_X |f(x)|^s\ch{B_i^*}(x) d\mu(x)\\
&\qquad\le C\sum_i \int_{B_i^*} |f(x)|^s d(x,\Omega^c)^\beta d\mu(x) 
\le C\int_\Omega |f(x)|^s w_\beta(x) d\mu(x).
\end{align*}
This proves the claim. 
\end{proof}

Let $0<t<1$ be a scaling parameter and let ${\mathcal W}_c(\Omega)=\{ B_i:i\in \N\}$ be a Whitney cover 
of an open set $\Omega\subsetneq X$ with $c=t/18$. There is a sequence of Lipschitz functions $\{\varphi_i\}_{i\in \N}$ 
corresponding to the cover ${\mathcal W}_c(\Omega)$ such that $\varphi_i$ satisfies the following properties
for every $i\in\N$: $0\le\varphi_i\le 1$, $\varphi_i=0$ outside of the balls $6B_i$, $\varphi_i\geq \nu>0$ on $3B_i$, 
and $\varphi_i$ is Lipschitz with constant $K/r_i$, where $\nu$ and $K$ 
only depend on the doubling constant of the measure $\mu$. Moreover,
\[
\sum_{i=1}^{\infty}\varphi_i(x)=1
\]
for every $x\in \Omega$.
Then the \emph{discrete convolution} of a locally integrable function $u$ at the scale $t$ is
\begin{equation}\label{DiscreteConvolution}
u_t(x)=\sum_{i=1}^{\infty}\varphi_i(x)u_{3B_i}, \qquad x\in X.
\end{equation}
Notice that $u_t(x)\le C \M u(x)$, for every $x\in X$, 
where $\M$ is the standard Hardy--Littlewood maximal operator on $X$. 
Hence, the boundedness of $\M$ 
implies that if $1<s<\infty$ and $u\in L^s(X,d\mu)$, then $u_t\in L^s(X,d\mu)$ as well.  

The proof of the following lemma is strongly inspired by the proof of~\cite[Lemma~5.1]{AaltoKinnunen2010}.
However, since Lemma~\ref{MaxFunctIsUpperGradient} contains an additional parameter $\kappa$,
we present the details for the convenience of the reader.

\begin{lemma}\label{MaxFunctIsUpperGradient}
Let $1<p<\infty$ and $0<\kappa\le 1$, and let
$\Omega\subsetneq X$ be an open set.
Assume that $X$ supports a $(1,s)$-Poincar\'{e} inequality for some $1<s<p$ with a dilation constant $\lambda$, and let $0<t<\min\{1,6\kappa/(3\lambda+2\kappa)\}$. Then 
there is a constant $C$ such that if $u\in  N^{1,p}_0(\Omega)$ and $g\in\mathcal{D}^p(u)$,
then $C({\M}_{\Omega,\kappa}g^s)^{1/s}\in\mathcal{D}^p(u_t)$.
\end{lemma}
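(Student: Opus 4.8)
The plan is to show that the candidate function $G:=C(\M_{\Omega,\kappa}g^s)^{1/s}$ satisfies the upper gradient inequality along a sufficiently rich family of curves, by testing it on short curves and then upgrading via a standard telescoping/chaining argument. First I would fix a curve $\gamma$ lying in $\Omega$ and, for a parameter to be chosen, compare the values $u_t(\gamma(0))$ and $u_t(\gamma(\ell_\gamma))$; because $\sum_i\varphi_i\equiv 1$ on $\Omega$, one has $u_t(x)-u_t(y)=\sum_i(\varphi_i(x)-\varphi_i(y))(u_{3B_i}-u_{3B_j})$ once a reference index $j$ is chosen, so the Lipschitz bound $|\varphi_i(x)-\varphi_i(y)|\le (K/r_i)d(x,y)$ together with the bounded overlap of the enlarged balls $6B_i$ reduces the estimate to controlling differences $|u_{3B_i}-u_{3B_j}|$ for neighbouring Whitney balls.

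The key quantitative step is the Poincar\'e-based estimate: for a ball $B$ and a concentric dilate, $(1,s)$-Poincar\'e gives $|u_{3B_i}-u_{3B_j}|\lesssim r_i\bigl(\vint_{\lambda(6B_i)}g^s\,d\mu\bigr)^{1/s}$ whenever $3B_j$ is comparable to and close to $3B_i$; the condition $0<t<6\kappa/(3\lambda+2\kappa)$ is exactly what is needed to guarantee that $\lambda(6B_i)=B(x_i,6\lambda r_i)=B(x_i,(\lambda t/3)d(x_i,\Omega^c))\subset B(x,\kappa d(x,\Omega^c))$ for $x$ in the relevant Whitney balls, so that $\bigl(\vint_{\lambda(6B_i)}g^s\,d\mu\bigr)^{1/s}\le C(\M_{\Omega,\kappa}g^s(x))^{1/s}=C\,G(x)/C$ along the curve. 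Thus for $x,y$ in a fixed Whitney ball $B_i$ (or in two overlapping ones) one gets $|u_t(x)-u_t(y)|\le C\,G(x)\,d(x,y)$, i.e.\ $C(\M_{\Omega,\kappa}g^s)^{1/s}$ is pointwise a local Lipschitz-type upper gradient for $u_t$ on each Whitney ball.

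To pass from this local estimate to a genuine $p$-weak upper gradient on all of $\Omega$, I would invoke the standard principle (as in \cite[Lemma~5.1]{AaltoKinnunen2010} and the gluing lemma \cite[Lemma~2.19]{BjornBjorn2011}): a nonnegative measurable function that is an upper gradient of $u_t$ on each member of an open cover with bounded overlap is, up to a constant absorbing the overlap, a $p$-weak upper gradient of $u_t$ on the union. Concretely, one splits an arbitrary rectifiable curve in $\Omega$ into subcurves each contained in a single Whitney ball, applies the local bound on each piece, and sums using finite overlap; the function $u_t$ extends by $0$ to $\Omega^c$ and $G=0$ there as well, and since $u\in N^{1,p}_0(\Omega)$ the boundary behaviour causes no problem, so the curves crossing $\partial\Omega$ are handled by the same vanishing-trace argument.

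The main obstacle is the bookkeeping in the first two steps: one must verify that for $x,y$ in a single Whitney ball only finitely many (a bounded number of) indices $i$ contribute, that all the balls $3B_i$, $6B_i$, $\lambda(6B_i)$ entering the chain remain inside $\Omega$ and comparable in radius to $d(x,\Omega^c)$ (this is where $c=t/18$ and the properties in Lemma~\ref{whitney} are used), and above all that the dilation constraint on $t$ really forces $\lambda(6B_i)\subset B(x,\kappa d(x,\Omega^c))$ so that the local maximal function genuinely dominates the Poincar\'e averages. Once these inclusions are pinned down the rest is a routine telescoping estimate, but choosing the enlargement factors so that every inclusion holds simultaneously is the delicate point.
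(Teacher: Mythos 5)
Your decomposition is genuinely different from the paper's: you telescope $u_t(x)-u_t(y)=\sum_i(\varphi_i(x)-\varphi_i(y))(u_{3B_i}-u_{3B_j})$ using $\sum_i\varphi_i\equiv 1$, estimate the differences of ball averages by the $(1,s)$-Poincar\'e inequality, and arrive at a pointwise Lipschitz-type bound $\lvert u_t(x)-u_t(y)\rvert\le C(\M_{\Omega,\kappa}g^s(x))^{1/s}\,d(x,y)$ for $x,y$ in a common Whitney ball. The paper instead writes $u_t=u+\sum_i\varphi_i(u_{3B_i}-u)$, applies the product rule to each summand to produce the explicit $p$-weak upper gradient $g+\sum_i\bigl(Kr_i^{-1}\lvert u-u_{3B_i}\rvert+g\bigr)\ch{6B_i}$ of $u_t$ on all of $X$, and only afterwards dominates this function almost everywhere by $C(\M_{\Omega,\kappa}g^s)^{1/s}$. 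Your interior estimate is correct (it even avoids the Lebesgue-point caveat that the paper needs for the pointwise bound on $\lvert u(x)-u_{3B_i}\rvert$), and your reading of the constraint on $t$ as the condition forcing the Poincar\'e balls into $B(x,\kappa d(x,\Omega^c))$ matches the paper's computation.

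The gap is in the passage from an upper gradient \emph{in} $\Omega$ to a $p$-weak upper gradient \emph{on} $X$. Splitting a curve into pieces lying in single Whitney balls handles every curve whose image is a compact subset of $\Omega$, but $\mathcal{D}^p(u_t)$ requires the inequality for $p$-a.e.\ curve in $X$, including curves meeting $\Omega^c$. For such a curve your estimate only gives $\lvert u_t(\gamma(0))-u_t(\gamma(s))\rvert\le\int_{\gamma\vert_{[0,s]}}G\,ds$ for $s$ below the exit time, and you must still show $u_t(\gamma(s))\to 0$ as $\gamma(s)$ approaches $\partial\Omega$. This is not the ``vanishing-trace argument'' for $u$: near $\partial\Omega$ the values of $u_t$ are ball averages $u_{3B_i}$, and $u\in N^{1,p}_0(\Omega)$ does not give pointwise decay of these averages along an arbitrary curve (and without completeness one cannot even invoke quasicontinuity); the gluing lemma \cite[Lemma~2.19]{BjornBjorn2011} does not apply either, since $u_t$ need not vanish on an open neighbourhood of $\Omega^c$. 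This is exactly the difficulty the paper's decomposition is built to avoid: each term $\varphi_i(u_{3B_i}-u)$ is a product of a globally Lipschitz function and a global Newtonian function, so the product rule yields an upper gradient valid for all curves in $X$ with no boundary analysis. Either adopt that decomposition for the boundary step, or restrict to $u\in N^{1,p}_c(\Omega)$ (which suffices for the application in Theorem~\ref{t.weightedhardy_c}), where $u_t$ vanishes on a neighbourhood of $\Omega^c$ and the issue disappears. Finally, $\mathcal{D}^p(u_t)$ consists of upper gradients lying in $L^p(X;d\mu)$, so you must also record that $(\M_{\Omega,\kappa}g^s)^{1/s}\in L^p$, which follows from the Hardy--Littlewood maximal theorem applied with exponent $p/s>1$.
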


\begin{proof}
Let $u\in N^{1,p}_0(\Omega)$ and $g\in \mathcal{D}^p(u)$.
Since
${\M}_{\Omega,\kappa}g$ depends only on the values of $g$ in $\Omega$, 
by the gluing lemma \cite[Lemma~2.19]{BjornBjorn2011} we may assume that $g=0$ in $X\setminus \Omega$.
 
Take a Whitney cover ${\mathcal W}_c(\Omega)=\{ B_i:i\in \N\}$ with $c=t/18$, set $L=6$, 
and write $u_t(x)$ for $x\in X$ as
\[
u_t(x)=u(x)+\sum_{i=1}^{\infty}\varphi_i(x)\bigl(u_{B(x_i,3r_i)}-u(x)\bigr).
\]
By the properties of the Lipschitz functions $\varphi_i$, see above, and by the product rule, 
see for instance~\cite[Lemma~2.15]{BjornBjorn2011},
we have that
\[
\biggl(\frac{K}{r_i}|u-u_{B(x_i,3r_i)}|+g\biggr)\ch{B(x_i,6r_i)}
\]
belongs to $\mathcal{D}^p(\varphi_i(u_{B(x_i,3r_i)}-u))$. This implies that
\[
g+\sum_{i=1}^{\infty}\biggl(\frac{K}{r_i}|u-u_{B(x_i,3r_i)}|+g\biggr)\ch{B(x_i,6r_i)}\in \mathcal{D}^p(u_t).
\]

To estimate this $p$-weak upper gradient of $u_t$ in terms of ${\M}_{\Omega,\kappa}g^s$, we fix a Lebesgue point $x\in B(x_i,6r_i)$ of function $u$, 
notice that $B(x_i,3r_i)\sub B(x,9r_i)$, and write
\begin{equation}\label{Est}
|u(x)-u_{B(x_i,3r_i)}|\le |u(x)-u_{B(x,9r_i)}|+|u_{B(x,9r_i)}-u_{B(x_i,3r_i)}|.
\end{equation}
For the second term on the right-hand side of \eqref{Est} we have
\begin{align*}
|u_{B(x,9r_i)}-u_{B(x_i,3r_i)}|&\le\vint_{B(x_i,3r_i)}|u-u_{B(x,9r_i)}|\,d\mu 
\le C\vint_{B(x,9r_i)}|u-u_{B(x,9r_i)}|\,d\mu\\
&\le C r_i\biggl(\vint_{B(x,9\lambda r_i)} g(y)^s \,d\mu(y)\biggr)^{\frac{1}{s}}\le Cr_i ({\M}_{\Omega,\kappa}g^s(x))^{1/s}.
\end{align*}
Here we use the fact that $g$ is also an $s$-weak upper gradient of $u$, and
the last estimate follows from the definition of maximal function ${\M}_{\Omega,\kappa}g^s$
and the inequality
\[
9\lambda r_i\le 9\lambda\biggl(\frac{18}{t}-6\biggr)^{-1}d(x,\Omega^c) < 9\lambda\biggl(\frac{9\lambda+6\kappa}{\kappa}-6\biggr)^{-1}d(x,\Omega^c) \le \kappa d(x,\Omega^c).
\]

For the first term on the right-hand side of \eqref{Est}, by a standard telescoping argument we have for
the Lebesgue point $x\in B(x_i,6r_i)$ of $u$ that
\begin{align*}
|u(x)-u_{B(x,9r_i)}|&\le C\sum_{j=0}^{\infty}\vint_{B(x,3^{2-j}r_i)}|u-u_{B(x,3^{2-j}r_i)}|\,d\mu\\ 
&\le C\sum_{j=0}^{\infty}3^{2-j}r_i\biggl(\vint_{B(x, \lambda 3^{2-j} r_i)} g(y)^s \,d\mu(y)\biggr)^{\frac{1}{s}}
\le Cr_i ({\M}_{\Omega,\kappa}g^s(x))^{1/s}.
\end{align*}
The last inequality is true for the same reason as above since $B(x,3^{2-j}\lambda r_i)\subset B(x,9\lambda r_i)$
for every $j=0,1,\dots$.

Hence, we obtain for every Lebesgue point $x\in B(x_i,6r_i)$ of $u$ that
\[
|u(x)-u_{B(x_i,3r_i)}|\le Cr_i ({\M}_{\Omega,\kappa}g^s(x))^{1/s}.
\]
This, together with the fact that $g=0$ in $X\setminus \Omega$, gives
\[
g(x)+\sum_{i=1}^{\infty}\biggl(\frac{K}{r_i}|u(x)-u_{B(x_i,3r_i)}|+g\biggr)\ch{B(x_i,6r_i)}(x)\le C({\M}_{\Omega,\kappa}g^s(x))^{1/s},
\]
for almost every $x\in X$. 
This, in turn, implies that $C({\M}_{\Omega,\kappa}g^s)^{1/s}$ is a $p$-weak upper gradient of $u_t$; see e.g.~\cite[Corollary 1.44]{BjornBjorn2011}. Notice that here the constant $C$ does not depend on $u$.
Finally, since ${\M}_{\Omega,\kappa}g^s\le {\M}g^s$
and $g\in \mathcal{D}^p(u)$, where $1\le s<p$,
the maximal function theorem (with exponent $p/s>1$) implies that
$({\M}_{\Omega,\kappa}g^s)^{1/s}\in L^p(X;d\mu)$.
Hence we conclude that $C({\M}_{\Omega,\kappa}g^s)^{1/s}\in\mathcal{D}^p(u_t)$,
as desired.
\end{proof}

\section{Quasiadditivity of capacity and Hardy--Sobolev inequalities}\label{sect:qadd}

This section contains our main results, which relate
weighted Hardy--Sobolev inequalities to the quasiadditivity
of the weighted capacities. 
The general approach in the proofs is similar
to those in~\cite{DydaVahakangas2015,HurriSyrjanenVahakangas2015,KLVbook,LuiroVahakangas2016},
but the present context requires different tools on the level of details. 
We first show that the validity of the 
$(q,p,\beta)$-Hardy--Sobolev inequality~\eqref{eq:(p,q)-weightedhardy} in $\Omega$,
for $1<p\le q<\infty$, implies the quasiadditivity property for 
the weighted relative capacity
with respect to  Whitney balls. 

\begin{thm}\label{t.necessary_qadd_weighted}
Let $1<p\le q<\infty$ and let $\Omega\subsetneq X$ be an open set. Assume that $(q,p,\beta)$-Hardy--Sobolev inequality \eqref{eq:(p,q)-weightedhardy} holds in $\Omega$ with a constant $C_1$, and let 
$\mathcal{W}_c(\Omega)=\{B_i:i\in\N\}$ be a Whitney cover of $\Omega$
for some $0<c<1/3$. Then there exists a constant $C$ such that
\[
\sum_{i\in \N} \capp{p,\beta}(E\cap B_i,\Omega)^{\frac{q}{p}} \leq  C \capp{p,\beta}(E,\Omega)^{\frac{q}{p}},
\]
for every set $E\Subset\Omega$.
\end{thm}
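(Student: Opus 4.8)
The plan is to localize a capacity test function for $E$ to the Whitney balls and then sum, using the Hardy--Sobolev inequality to absorb the zeroth order term created by the localization. Fix a capacity test function $u\in N_c^{1,p}(\Omega)$ for $E$, so that $u\ge 1$ on $E$, and let $g\in\mathcal{D}^p(u)$. Choose $L$ with $1<L\le(3c)^{-1}$, which is possible since $c<1/3$, and set $B_i^*=LB_i$. By Lemma~\ref{whitney} we then have $B_i^*\subset\Omega$, $d(x,\Omega^c)\approx r_i$ for all $x\in B_i^*$, and the balls $B_i^*$ have bounded overlap; moreover $\mu(B_i^*)\approx\mu(B_i)$ by doubling. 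For each $i$ pick a Lipschitz function $\eta_i$ with $\ch{B_i}\le\eta_i\le\ch{B_i^*}$ and $\operatorname{Lip}(\eta_i)\le C/r_i$. Then $u\eta_i\in N_c^{1,p}(\Omega)$ (its support is bounded and stays away from $\Omega^c$), we have $u\eta_i\ge 1$ on $E\cap B_i$ since $u\ge 1$ on $E$ and $\eta_i=1$ on $B_i$, and the product rule for upper gradients (cf.\ \cite[Lemma~2.15]{BjornBjorn2011}) gives $\bigl(g+C|u|/r_i\bigr)\ch{B_i^*}\in\mathcal{D}^p(u\eta_i)$.

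Using $u\eta_i$ to test $\capp{p,\beta}(E\cap B_i,\Omega)$ and then raising to the power $q/p\ge 1$, an elementary inequality yields
\[
\capp{p,\beta}(E\cap B_i,\Omega)^{q/p}\le C\biggl(\int_{B_i^*}g^p\,d(x,\Omega^c)^\beta\,d\mu\biggr)^{q/p}+C\biggl(\int_{B_i^*}\frac{|u|^p}{r_i^p}\,d(x,\Omega^c)^\beta\,d\mu\biggr)^{q/p}.
\]
For the first term, the bounded overlap of $\{B_i^*\}$ and the elementary estimate $\sum_i a_i^{q/p}\le\bigl(\sum_i a_i\bigr)^{q/p}$ for $a_i\ge0$ give $\sum_i\bigl(\int_{B_i^*}g^p\,d(x,\Omega^c)^\beta\,d\mu\bigr)^{q/p}\le C\bigl(\int_\Omega g^p\,d(x,\Omega^c)^\beta\,d\mu\bigr)^{q/p}$. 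For the second term I would first use $d(x,\Omega^c)^\beta\approx r_i^\beta$ on $B_i^*$, then apply H\"older's inequality on $B_i^*$ with exponents $q/p$ and $q/(q-p)$ (the step being trivial when $q=p$), and finally use $\mu(B_i^*)\approx\mu(B_i)\approx\mu(B(x,d(x,\Omega^c)))$ for $x\in B_i^*$, to obtain
\[
\biggl(\int_{B_i^*}\frac{|u|^p}{r_i^p}\,d(x,\Omega^c)^\beta\,d\mu\biggr)^{q/p}\le C\int_{B_i^*}\frac{|u(x)|^q}{d(x,\Omega^c)^{\frac qp(p-\beta)}}\,\mu(B(x,d(x,\Omega^c)))^{\frac{q-p}{p}}\,d\mu(x).
\]

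Summing the last display over $i$, using again the bounded overlap of $\{B_i^*\}$ and then the $(q,p,\beta)$-Hardy--Sobolev inequality~\eqref{eq:(p,q)-weightedhardy} applied to $u$ and $g$, the right-hand side is at most $C\,C_1^{\,q}\bigl(\int_\Omega g^p\,d(x,\Omega^c)^\beta\,d\mu\bigr)^{q/p}$. Combining this with the estimate for the first term gives
\[
\sum_{i\in\N}\capp{p,\beta}(E\cap B_i,\Omega)^{q/p}\le C\biggl(\int_\Omega g^p\,d(x,\Omega^c)^\beta\,d\mu\biggr)^{q/p},
\]
and taking the infimum over all capacity test functions $u$ for $E$ and all $g\in\mathcal{D}^p(u)$ yields the claimed quasiadditivity. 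The one point that requires care is the bookkeeping in the second displayed inequality: one must check that the H\"older step, combined with the comparisons $d(x,\Omega^c)^\beta\approx r_i^\beta$ and $\mu(B_i^*)\approx\mu(B_i)\approx\mu(B(x,d(x,\Omega^c)))$ valid on $B_i^*$ (from Lemma~\ref{whitney} and doubling), produces exactly a constant multiple of the integrand on the left-hand side of~\eqref{eq:(p,q)-weightedhardy}, so that summing over $i$ and invoking the Hardy--Sobolev inequality is legitimate; one should also record that $L$ can be chosen admissibly for every $c\in(0,1/3)$. Everything else — the product rule for upper gradients, the passage to the infimum, and the Whitney estimates — is routine.
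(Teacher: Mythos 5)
Your proposal is correct and follows essentially the same route as the paper's own proof: localizing a capacity test function for $E$ with Lipschitz cut-offs adapted to the dilated Whitney balls $B_i^*$, applying the product rule for upper gradients, handling the gradient term via bounded overlap and $\sum_i a_i^{q/p}\le(\sum_i a_i)^{q/p}$, and converting the zeroth-order term by H\"older and the comparison $\mu(B_i^*)\lesssim\mu(B(x,d(x,\Omega^c)))$ into the left-hand side of the Hardy--Sobolev inequality. The bookkeeping you flag does work out exactly as you describe, so no changes are needed.
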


\begin{proof}
Let $E\Subset\Omega$.
Let $u\in N^{1,p}_c(\Omega)$ be such that $u(x)\ge 1$, for every $x\in E$, and let $g\in\mathcal{D}^p(u)$.

Fix $L>1$
satisfying $c<(3L)^{-1}$ and recall that $B_i^*=LB_i$, $i\in\N$.
For every $i\in\N$ we choose a Lipschitz function $\varphi_i$ satisfying the
following properties: $\varphi_i(x)=1$ for every $x\in B_i$, $\varphi_i$ is $K/r_i$-Lipschitz, for some $K\geq 1$, 
and $0\le \varphi_i(x)\le \ch{B_i^*}(x)$, for every $x\in X$. 
For instance, we can take
\[
\varphi_i(x)=\max\left\{0,1-\frac{d(x,B_i)}{d(B_i,X\setminus B_i^*)}\right\}, \qquad x\in X,
\]
in which case we can choose $K=(L-1)^{-1}$. 
Then, for every $i\in\N$, $u_i=u\varphi_i\in N^{1,p}_c(\Omega)$,
and the product rule 
(see~\cite[Lemma~2.15]{BjornBjorn2011})
implies that the function
\[
g_{i}(x)=(g(x)+Kr_i^{-1}|u(x)|) \ch{B_i^*}(x),\quad x\in X,
\]
belongs to $\mathcal{D}^p(u_i)$.
Hence $u_i$ is a capacity test function
for $E\cap B_i$ and
\[
\capp{p,\beta}(E\cap B_i,\Omega)\le \int_\Omega g_{i}(x)^p\,d(x,\Omega^c)^{\beta} \,d\mu(x).
\]
Since $r_i^{-1}\le C d(x,\Omega^c)^{-1}$, for every $x\in B_i^*$,
\begin{align*}
&\biggl(\int_\Omega g_{i}(x)^p\,d(x,\Omega^c)^{\beta}\,d\mu(x)\biggr)^\frac{q}{p}\\
&\qquad\le C \biggl(\int_{B_i^*}g(x)^p\,d(x,\Omega^c)^{\beta}\,d\mu(x)\biggr)^\frac{q}{p}
+C\biggl(\int_{B_i^*}|u(x)|^p d(x,\Omega^c)^{-p+\beta}\,d\mu(x)\biggr)^\frac{q}{p}.
\end{align*}
Since $q\geq p$, we can write
\begin{align*}
&\sum_{i=1}^{\infty} \capp{p,\beta}(E\cap B_i,\Omega)^{\frac{q}{p}}
\le \sum_{i=1}^{\infty} \biggl(\int_\Omega g_{i}(x)^p\,d(x,\Omega^c)^{\beta}\,d\mu(x)\biggr)^\frac{q}{p}\\ 
&\qquad \le C\sum_{i=1}^{\infty}  \biggl(\int_{B_i^*}g(x)^p\,d(x,\Omega^c)^{\beta}\,d\mu(x)\biggr)^\frac{q}{p}
+C\sum_{i=1}^{\infty} \biggl(\int_{B_i^*}|u(x)|^p d(x,\Omega^c)^{-p+\beta}\,d\mu(x)\biggr)^\frac{q}{p} \\
&\qquad \le   C\Biggl(\sum_{i=1}^{\infty}\int_{B_i^*}g(x)^p\,d(x,\Omega^c)^{\beta}\,d\mu(x)\Biggr)^\frac{q}{p}
+C\sum_{i=1}^{\infty}\mu(B_i^*)^{\frac{q}{p}\frac{q-p}{q}}\int_{B_i^*}\frac{|u(x)|^q}{d(x,\Omega^c)^{\frac{q}{p}(p-\beta)}}\,d\mu(x).
\end{align*}
For the first term on the right-hand side, due to the finite overlap of the balls $B_i^*$, we get
\[
\Biggl(\sum_{i=1}^{\infty}\int_{B_i^*}g(x)^p\,d(x,\Omega^c)^{\beta}\,d\mu(x)\Biggr)^\frac{q}{p}\le
C\biggl(\int_{\Omega}g(x)^p\,d(x,\Omega^c)^{\beta}\,d\mu(x)\biggr)^\frac{q}{p}.
\]
To estimate the second term, we notice from Lemma~\ref{whitney} that $B_i^*\subset B(x,d(x,\Omega^c))$, for every $x\in B_i^*$. This inclusion together with the finite overlap of the balls $B_i^*$ and
inequality~\eqref{eq:(p,q)-weightedhardy} gives
\begin{align*}
&\sum_{i=1}^{\infty}\mu(B_i^*)^{\frac{q-p}{p}}\int_{B_i^*}\frac{|u(x)|^q}{d(x,\Omega^c)^{\frac{q}{p}(p-\beta)}}\,d\mu(x)\\ 
&\qquad\le C\sum_{i=1}^{\infty}\int_{B_i^*} \frac{|u(x)|^{q}}{d(x,\Omega^c)^{\frac{q}{p}(p-\beta)}}\mu(B(x,d(x,\Omega^c)))^{\frac{q-p}{p}}\,d\mu(x)\\
&\qquad \le  C\int_{\Omega}\frac{|u(x)|^q}{d(x,\Omega^c)^{\frac{q}{p}(p-\beta)}}\mu(B(x,d(x,\Omega^c)))^{\frac{q-p}{p}}\,d\mu(x)\\
&\qquad\le C\biggl(\int_{\Omega}g(x)^p\,d(x,\Omega^c)^{\beta}\,d\mu(x)\biggr)^\frac{q}{p}.
\end{align*}
After collecting the estimates above, and taking the infimum first over all $g\in\mathcal{D}^p(u)$
and then over all functions $u$ as above, the claim follows.
\end{proof}

For $q\le \frac{Qp}{Q-p}$ and sufficiently small parameters $c$, there is also a 
partial converse of Theorem~\ref{t.necessary_qadd_weighted}. In addition
to the quasiadditivity property in part~\ref{it.weightedhardy_c_compact},
the next result contains the \emph{weak quasiadditivity} property
in part~\ref{it.weightedhardy_c_cubes}. 

\begin{thm}\label{t.weightedhardy_c}
Let $1<p\le q\le \frac{Qp}{Q-p}$ and 
assume that $X$ supports a $(1,s)$-Poincar\'{e} inequality for some $1<s<p$ with a dilation constant $\lambda$.
Let $\Omega\subsetneq X$ be an open set and let $\mathcal{W}_c(\Omega)=\{B_i:i\in\N\}$ be a
Whitney cover of $\Omega$ with $0<c<\frac{1}{45\lambda+8}$.
Then the following conditions are equivalent:
\begin{enumerate}[label=\textup{(\roman*)}] 

\item\label{it.weightedhardy_c_HS} The $(q,p,\beta)$-Hardy--Sobolev inequality \eqref{eq:(p,q)-weightedhardy} holds in $\Omega$.

\item\label{it.weightedhardy_c_compact} There exist constants $C_1$ and $C_2$ such that 
\begin{equation*}
\sum_{i=1}^\infty \capp{p,\beta}(E\cap  B_i,\Omega)^{\frac q p}\le C_1 \capp{p,\beta}(E,\Omega)^{\frac q p},
\end{equation*}
for every set $E\Subset\Omega$,
and the capacity lower bound \eqref{eq.weightedcap_lower_w} holds with the constant $C_2$
for every ball $B_i\in \mathcal{W}_c(\Omega)$.

\item\label{it.weightedhardy_c_cubes} There exist constants $C_1$ and $C_2$ such that 
\begin{equation*}
\sum_{i\in I} \capp{p,\beta}( B_{i},\Omega)^{\frac q p}\le
C_1 \capp{p,\beta}\Biggl(\bigcup_{i\in I} B_{i},\Omega\Biggr)^{\frac q p},
\end{equation*}
whenever $I\subset\N$ is a finite set,
and the capacity lower bound \eqref{eq.weightedcap_lower_w} holds with the constant $C_2$
for every ball $B_i\in \mathcal{W}_c(\Omega)$.
\end{enumerate}
\end{thm}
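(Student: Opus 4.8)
The plan is to prove the chain of implications \ref{it.weightedhardy_c_HS} $\Rightarrow$ \ref{it.weightedhardy_c_compact} $\Rightarrow$ \ref{it.weightedhardy_c_cubes} $\Rightarrow$ \ref{it.weightedhardy_c_HS}, so that the substantive work is concentrated in the last step. The implication \ref{it.weightedhardy_c_HS} $\Rightarrow$ \ref{it.weightedhardy_c_compact} is essentially already done: the quasiadditivity half is exactly Theorem~\ref{t.necessary_qadd_weighted} (valid for any $0<c<1/3$, in particular for our more restrictive $c$), and the capacity lower bound is Lemma~\ref{eq.LowerBallweightedCapacity}. The implication \ref{it.weightedhardy_c_compact} $\Rightarrow$ \ref{it.weightedhardy_c_cubes} is immediate: given a finite $I\subset\N$, apply the quasiadditivity hypothesis with the test set $E=\bigcup_{i\in I}B_i$, which is compactly contained in $\Omega$ since only finitely many Whitney balls are involved and each $\overline{B_i}\Subset\Omega$; since $E\cap B_i\supset B_i$ for $i\in I$ and the capacity is monotone, the left-hand sum over $i\in I$ of $\capp{p,\beta}(B_i,\Omega)^{q/p}$ is bounded by $\sum_{i\in\N}\capp{p,\beta}(E\cap B_i,\Omega)^{q/p}\le C_1\capp{p,\beta}(E,\Omega)^{q/p}$. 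The capacity lower bound is carried along verbatim.

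The heart of the matter is \ref{it.weightedhardy_c_cubes} $\Rightarrow$ \ref{it.weightedhardy_c_HS}, and this is where the discrete convolution enters. By Theorem~\ref{t.mazya_char_weighted_qp} it suffices to establish the Maz'ya inequality \eqref{eq:mazya and weighted hardy} for all $E\Subset\Omega$; and by a routine approximation/exhaustion it suffices to do this when $u$ is a capacity test function for $E$ with $g\in\mathcal D^p(u)$. First I would replace $u$ by its discrete convolution $u_t$ at a scale $t$ chosen small enough that Lemma~\ref{MaxFunctIsUpperGradient} applies with $\kappa$ close to $1$ — the constraint $c<\tfrac{1}{45\lambda+8}$ is precisely what is needed to have $c=t/18$ admissible in both that lemma and Lemma~\ref{l.maximal_bounded}. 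The function $u_t$ is a bounded finite sum of the form $\sum_i \varphi_i u_{3B_i}$, comparable to $1$ on $E$ up to a fixed constant (since $u\ge 1$ on $E$ and $u_t$ is, up to a harmless dilation, an average of $u$ over nearby Whitney balls), so $C u_t$ is still admissible for $E$; meanwhile $C(\M_{\Omega,\kappa}g^s)^{1/s}\in\mathcal D^p(u_t)$, and by Lemma~\ref{l.maximal_bounded} its weighted $L^p$-norm is controlled by that of $g$. The key point is that the "cylindrical" structure of $u_t$ lets us compare $\int_E (\text{weight})\,d\mu$ to a sum over those finitely many Whitney balls $B_i$ that meet the (compactly contained) set $\{u_t\ge c_0\}$: on each such ball the value $u_{3B_i}$ is bounded below, the weight $d(x,\Omega^c)^{\beta}$ and the measure $\mu(B(x,d(x,\Omega^c)))$ are comparable to $r_i^\beta$ and $\mu(B_i)$ respectively by Lemma~\ref{whitney}, so the left side of \eqref{eq:mazya and weighted hardy} is comparable to $\sum_{i\in I}\mu(B_i)r_i^{q(\beta-p)/p}$ for a suitable finite index set $I$. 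Using the capacity lower bound \eqref{eq.weightedcap_lower_w} this sum is $\lesssim\sum_{i\in I}\capp{p,\beta}(B_i,\Omega)^{q/p}$, and now the weak quasiadditivity hypothesis in \ref{it.weightedhardy_c_cubes} gives $\lesssim\capp{p,\beta}(\bigcup_{i\in I}B_i,\Omega)^{q/p}$. Finally $C u_t$ is a test function for $\bigcup_{i\in I}B_i$ as well (it exceeds $c_0$ on each such $B_i$), so this last capacity is $\lesssim(\int_\Omega (\M_{\Omega,\kappa}g^s)^{p/s}d(x,\Omega^c)^\beta d\mu)^{q/p}\lesssim(\int_\Omega g^p d(x,\Omega^c)^\beta d\mu)^{q/p}$, which is exactly the right-hand side of \eqref{eq:mazya and weighted hardy} after taking the infimum over $u$ and $g$.

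The main obstacle, and the step I would be most careful about, is the bookkeeping that produces the correct finite index set $I$ and makes $Cu_t$ simultaneously an admissible test function for $E$ and for $\bigcup_{i\in I}B_i$, with all the exponent gymnastics ($q\ge p$ used to pass sums inside powers, the $(q,p)$-Poincar\'e/Sobolev embedding $q\le \tfrac{Qp}{Q-p}$ used implicitly through \eqref{eq:QDoublingDim} when comparing $\mu(B_i)$-powers across scales) lining up. One has to be slightly delicate because $\{u_t\ge c_0\}$ need not contain $E$ on the nose — rather it contains a neighborhood of $E$ after choosing $c_0$ depending only on the doubling constant and the Whitney parameters — and because the overlap constant $M$ from Lemma~\ref{whitney}\ref{whitney overlap} must be absorbed uniformly. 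None of this is deep, but it is where the constant $\tfrac{1}{45\lambda+8}$ and the precise hypotheses of Lemmas~\ref{l.maximal_bounded} and~\ref{MaxFunctIsUpperGradient} get used, so I would present it carefully rather than wave it through.
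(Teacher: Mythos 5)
Your outer structure (the cycle \ref{it.weightedhardy_c_HS}~$\Rightarrow$~\ref{it.weightedhardy_c_compact}~$\Rightarrow$~\ref{it.weightedhardy_c_cubes}~$\Rightarrow$~\ref{it.weightedhardy_c_HS}, with the first two implications coming from Theorem~\ref{t.necessary_qadd_weighted}, Lemma~\ref{eq.LowerBallweightedCapacity} and the choice $E=\bigcup_{i\in I}B_i$) matches the paper, and so does the idea of verifying the Maz'ya condition~\eqref{eq:mazya and weighted hardy} via the discrete convolution. But the core of your argument for \ref{it.weightedhardy_c_cubes}~$\Rightarrow$~\ref{it.weightedhardy_c_HS} has a genuine gap: the claim that $u_t$ is ``comparable to $1$ on $E$'' because $u\ge 1$ on $E$ is false. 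The discrete convolution $u_t=\sum_i\varphi_i\,u_{3B_i}$ sees only the \emph{averages} of $u$ over Whitney balls, and if $E\cap B_i$ has small measure relative to $B_i$ then $u_{3B_i}$ can be arbitrarily small even though $u\ge 1$ on $E\cap B_i$. Consequently $Cu_t$ need not be admissible for $E$, and the Whitney balls meeting $E$ need not all meet $\{u_t\ge c_0\}$, so your reduction of the left-hand side of~\eqref{eq:mazya and weighted hardy} to a sum over balls where $u_t$ is bounded below does not cover all of $E$. The omission is not repairable within your scheme, because for the ``bad'' balls the bound $\int_{E\cap B_i}(\cdots)\le C\capp{p,\beta}(B_i,\Omega)^{q/p}$ followed by weak quasiadditivity would require a test function with controlled energy for the union of \emph{all} balls meeting $E$, which is exactly what is unavailable there.

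The paper's proof resolves this by partitioning the Whitney cover into $\mathcal W^1=\{B_i: u_{B_i}<\tfrac12\}$ and the (necessarily finite) family $\mathcal W^2=\{B_i: u_{B_i}\ge\tfrac12\}$. On a ball $B_i\in\mathcal W^1$ one has $|u(x)-u_{B_i}|>\tfrac12$ for $x\in E\cap B_i$, so the contribution of $E\cap B_i$ is controlled directly by the $(q,p)$-Poincar\'e inequality on $2\lambda B_i$ and summed using the bounded overlap of the dilated balls --- no capacity, no quasiadditivity, and no discrete convolution are needed for this half. Only on $\mathcal W^2$, where the averages are bounded below by construction, does the discrete convolution $u_t$ become a legitimate test function for $\capp{p,\beta}\bigl(\bigcup_{B\in\mathcal W^2}B,\Omega\bigr)$ (with $C(\M_{\Omega,\kappa}g^s)^{1/s}$ as its $p$-weak upper gradient via Lemma~\ref{MaxFunctIsUpperGradient}, and Lemma~\ref{l.maximal_bounded} controlling its weighted energy), and there your chain --- capacity lower bound~\eqref{eq.weightedcap_lower_w}, then weak quasiadditivity, then the maximal function bound --- goes through exactly as you describe. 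You have the right toolkit for half of the estimate; the missing idea is the dichotomy according to the size of $u_{B_i}$ and the Poincar\'e-inequality treatment of the balls with small average.
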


\begin{proof}
The implication from~\ref{it.weightedhardy_c_HS} to~\ref{it.weightedhardy_c_compact} follows from 
Lemma~\ref{eq.LowerBallweightedCapacity} and
Theorem~\ref{t.necessary_qadd_weighted}.
The implication from~\ref{it.weightedhardy_c_compact} to~\ref{it.weightedhardy_c_cubes}
follows
by considering $E=\bigcup_{i\in I} B_{i}$.

Assume now that condition~\ref{it.weightedhardy_c_cubes} holds with a constant $C_1$.
To show that~\ref{it.weightedhardy_c_HS} holds,
by Theorem~\ref{t.mazya_char_weighted_qp} it suffices
to prove that there exists a constant $C$ such that
\begin{equation}\label{e.maz'ya_app}
\int_E \frac{\mu(B(x,d(x,\Omega^c)))^\frac{q-p}{p}}{d(x,\Omega^c)^{\frac{q}{p}(p-\beta)}}\,d\mu(x) 
\le C\capp{p,\beta}(E,\Omega)^{\frac q p},
\end{equation}
for every set $E\Subset\Omega$.
To this end, fix $E\Subset\Omega$,  
let $u\in N^{1,p}_c(\Omega)$ be such that $u(x)\ge 1$ for every $x\in E$, and let
$g\in\mathcal{D}^p(u)$. 
By considering $\lvert u\rvert$ instead of $u$, we may assume that $u\ge 0$ in $X$;
observe that $\lvert u\rvert\in N^{1,p}_c(\Omega)$ and 
$g\in\mathcal{D}^p(\lvert u\rvert)$.

Partition $\mathcal{W}_c(\Omega)=\{B_i : i\in\N\}$ into two subfamilies:
$\mathcal{W}^1 = \{B_i\in\mathcal{W}_c(\Omega) : u_{B_i}  < \tfrac12\}$
and $\mathcal{W}^2 = \mathcal{W}_c(\Omega)\setminus \mathcal{W}^1$.
Observe that $\mathcal{W}^2$ is necessarily finite since $u\in N^{1,p}_c(\Omega)$. 
The left-hand side of~\eqref{e.maz'ya_app} can be estimated from above by
\begin{equation}\label{e.w_split}
\Bigg(\sum_{B\in\mathcal{W}^1} + \sum_{B\in\mathcal{W}^2}\Bigg)
\int_{E\cap B}\frac{\mu(B(x,d(x,\Omega^c)))^\frac{q-p}{p}}{d(x,\Omega^c)^{\frac{q}{p}(p-\beta)}}\,d\mu(x).
\end{equation}
To estimate the first sum, we
observe that, for every $B_i\in\mathcal{W}^1$ and every $x\in E\cap B_i$,
\[
\tfrac 12 =  1-\tfrac 12 < u(x) - u_{B_i}  = \lvert u(x)- u_{B_i}\rvert.
\]
By the definition and the properties of Whitney balls, see Lemma~\ref{whitney}, if $B_i\in\mathcal{W}_c(\Omega)$ and $x\in B_i$, then
\[
B_i\subset B(x, d(x, \Omega^c))\quad\text{ and }\quad (1/c-1)r_i\le d(x, \Omega^c)\le (1/c+1)r_i.
\]
Consequently,
\begin{equation}\label{MeasuresRelations}
\mu(B_i)\le \mu(B(x,d(x,\Omega^c)))\quad \text{ and }\quad \mu(B(x,d(x,\Omega^c)))\le C \mu(B_i)
\end{equation}
for every $x\in B_i$.

Since $X$ supports a $(1,s)$-Poincar\'e inequality for $1<s<p$ with the dilation constant $\lambda$, then $X$ supports a $(1,p)$-Poincar\'e inequality with the same dilation constant $\lambda$, and hence the $(q,p)$-Poincar\'e inequality with the dilation constant $2\lambda$; see \cite[Theorem~4.21]{BjornBjorn2011}.
 
The observations above lead to the estimates
\begin{align*}
\sum_{B\in\mathcal{W}^1} \int_{E\cap B}  \frac{\mu(B(x,d(x,\Omega^c)))^{\frac{q-p}{p}}}{d(x,\Omega^c)^{\frac{q}{p}(p-\beta)}}\,d\mu (x)
&\le C \sum_{B_i\in\mathcal{W}^1} \frac{\big(\mu(B_i)r_i^\beta\big)^{\frac{q}{p}}}{r_i^{q}} \vint_{B_i} \lvert u(x)- u_{B_i}\rvert^q\,d\mu (x)\\
& \le C \sum_{B_i\in\mathcal{W}^1} \frac{\big(\mu(B_i)r_i^\beta\big)^{\frac{q}{p}}}{r_i^{q}} r_i^{q} \biggl(\vint_{2\lambda B_i} g(x)^p \,d\mu(x)\biggr)^{\frac q p}\\
& \le C  \Biggl(\sum_{B\in\mathcal{W}^1}\int_{2\lambda B} g(x)^p\, d(x, \Omega^c)^\beta\,d\mu(x)\biggr)^{\frac q p}\\
& \le C \biggl(\int_{\Omega} g(x)^p \,d(x, \Omega^c)^\beta\,d\mu(x)\biggr)^{\frac q p}.
\end{align*}
The last inequality follows from the finite overlap of dilated Whitney balls $2\lambda B$ which is guaranteed by the relation between $c$ and $\lambda$ in the formulation of the theorem.

To estimate the second sum in~\eqref{e.w_split},
let $B_i=B(x_i,r_i)\in\mathcal{W}^2$ and $x\in B_i$. Recall that $r_i=c d(x_i,\Omega^c)$, 
where $c<\tfrac{1}{45\lambda+8}<\tfrac{1}{20}$.
Choose a number $t$ such that 
$\frac{18c}{1-2c}<t<\frac{6}{15\lambda+2}$; then, in particular, $0<t<1$.
Let $\mathcal{W}_{t/18}(\Omega)=\{B(x'_j,r'_j) : j\in\N\}$ be a Whitney cover of $\Omega$, 
$\{\varphi_j\}$ be a partition of unity related to $\mathcal{W}_{t/18}(\Omega)$, 
and $u_t$ be the the discrete convolution of $u\ge 0 $ at scale $t$ as
in \eqref{DiscreteConvolution}. Then $x\in B(x'_j,r'_j)$ for some $j\in \N$ 
and the choice of $t$ guarantees that $r_i\le r'_j$, since by property (3) of Lemma~\ref{whitney} we have
\[
(1/c-1)r_i\le d(x,\Omega^c)\le (18/t+1)r'_j,
\]
which implies
\[
r_i\le\frac{18/t+1}{1/c-1}r'_j\le r'_j.
\]
Similarly, we obtain that $r_j'\le Cr_i$, for some $C>0$, since
\[
(18/t-1)r'_j\le d(x,\Omega^c)\le (1/c+1)r_i.
\] 
By the definition of the discrete convolution $u_t$, properties of functions $\varphi_j$, 
the inclusion 
$B(x_i,r_i)\subset B(x'_j,3r'_j)$ (from $r_i\le r'_j$), 
and the doubling condition for measure $\mu$, we obtain
\begin{equation}\label{e.m_est1}
\begin{split}
u_t(x) & \ge \varphi_j(x)\vint_{B(x'_j,3r'_j)} u(y)\,d\mu(y)
\ge C\vint_{B(x'_j,3r'_j)} u(y)\,d\mu(y) \\ 
& \ge C \vint_{B(x_i,r_i)} u(y)\,d\mu(y) \ge C u_{B_i} \ge \frac{C}{2},
\end{split}
\end{equation}
where $C$ depends on $\lambda$, $c$, and the doubling constant of the measure $\mu$.

Since $u\in N^{1,p}_c(\Omega)$, we have that $u\in L^p(X;d\mu)$, and hence $u_t\in L^p(X;d\mu)$; 
see the comment before Lemma~\ref{MaxFunctIsUpperGradient}. Furthermore, by Lemma~\ref{MaxFunctIsUpperGradient} the set $\mathcal{D}^p(u_t)$ is nonempty, and
as the sum in \eqref{DiscreteConvolution} has only finitely many non-zero terms for $u\in N^{1,p}_c(\Omega)$, the support of $u_t$
is bounded and has a positive distance to $\Omega^c$. Thus we conclude that $u_t\in N^{1,p}_c(\Omega)$.

By~\eqref{e.m_est1} there exists $C_2>0$ such that $C_2u_t\ge 1$ in $\bigcup_{B\in\mathcal{W}^2} B$.
Hence, $C_2 u_t$ is a capacity test function for
$\capp{p,\beta}\bigl(\bigcup_{B\in\mathcal{W}^2} B,\Omega\bigr)$. Since $t<6/(15\lambda+2)$, we can choose
$0<\kappa < 1/5$ such that $t<6\kappa/(3\lambda+2\kappa)$, and, by 
Lemma~\ref{MaxFunctIsUpperGradient}, there is a constant $C_3$ such that 
$C_3({\M}_{\Omega,\kappa}g^s)^{1/s}\in \mathcal{D}^p(u_t)$.

Let $B_i=B(x_i,r_i)\in \mathcal{W}^2$.
By inequalities \eqref{MeasuresRelations} and \eqref{eq.weightedcap_lower_w}, it follows that
\begin{align*}
\int_{E\cap B_i} \frac{\mu(B(x,d(x,\Omega^c)))^{\frac{q-p}{p}}}{d(x,\Omega^c)^{\frac{q}{p}(p-\beta)}}\,d\mu(x)
& \le C\mu(B_i)^{\frac{q}{p}-1} \int_{E\cap B_i}d(x,\Omega^c)^{\frac{q}{p}(\beta-p)}\,d\mu(x)\\
& \le C\big(\mu(B_i)r_i^{\beta-p}\big)^{q/p}\le C\capp{p,\beta}(B_i,\Omega)^{q/p}.
\end{align*}
Since $\mathcal{W}^2$ is finite, we obtain
from the assumed condition~\ref{it.weightedhardy_c_cubes}
that
\begin{align*}
\sum_{B\in\mathcal{W}^2}
\int_{E\cap B} \frac{\mu(B(x,d(x,\Omega^c)))^\frac{q-p}{p}}{d(x,\Omega^c)^{\frac{q}{p}(p-\beta)}}\,d\mu(x)
&\le C \sum_{B\in\mathcal{W}^2} \capp{p,\beta}(B,\Omega)^{\frac q p}\\
&\le C \capp{p,\beta}\Biggl(\bigcup_{B\in\mathcal{W}^2} B,\Omega\Biggr)^{\frac q p}\\
&\le C \biggl(\int_{\Omega} ({\M}_{\Omega,\kappa} g^s(x))^{p/s}\,d(x, \Omega^c)^\beta\,d\mu(x)\biggr)^{\frac q p}\\
&\le C \biggl(\int_{\Omega} g(x)^p\,d(x, \Omega^c)^\beta\,d\mu\biggr)^{\frac q p},
\end{align*}
where the last inequality is valid since $s<p$, and thus the maximal operator ${\M}_{\Omega,\kappa}$ is bounded on $L^{p/s}(\Omega,w_{\beta}\,d\mu)$ by Lemma~\ref{l.maximal_bounded}.

By combining the estimates for $\mathcal{W}^1$ and $\mathcal{W}^2$
we obtain
\[\begin{split}
\int_E \frac{\mu(B(x,d(x,\Omega^c)))^\frac{q-p}{p}}{d(x,\Omega^c)^{\frac{q}{p}(p-\beta)}}\,d\mu(x)
& \le \sum_{B\in\mathcal{W}} \int_{E\cap B}\frac{\mu(B(x,d(x,\Omega^c)))^\frac{q-p}{p}}{d(x,\Omega^c)^{\frac{q}{p}(p-\beta)}}\,d\mu(x)\\
&\le C\biggl(\int_{\Omega}g(x)^p\,d(x, \Omega^c)^\beta\,d\mu\biggr)^{\frac q p}.
\end{split}\]
The desired estimate~\eqref{e.maz'ya_app} follows
by taking infimum over all $g\in\mathcal{D}^p(u)$ and then over all functions
$u$ as above.
\end{proof}

Theorem~\ref{t.weightedhardy_c} gives the 
following implication for the validity
of $(q,p,\beta)$-Hardy--Sobolev inequalities for different values of
the parameter $q$. Observe that this is not completely obvious from the
statement of the Hardy--Sobolev inequality.

\begin{remark}\label{r.qadd_interpolate}
Let $1<p\le q<p^*=\frac{Qp}{Q-p}$ and $\beta\in\R$, and
assume that $X$ supports a $(1,s)$-Poincar\'{e} inequality for some $1<s<p$.
If a $(q,p,\beta)$-Hardy--Sobolev inequality holds in an open set $\Omega\subsetneq X$,
then also $(q',p,\beta)$-Hardy--Sobolev inequalities hold in $\Omega$ for every $q\le q' \le p^*$.
This follows from Theorem~\ref{t.weightedhardy_c} and the fact that if $q\le q'$, then
\[
\Biggl(\sum_{i=1}^\infty \capp{p,\beta}(E\cap B_i,\Omega)^{\frac {q'} p}\Biggr)^{\frac p {q'}}
\le \Biggl(\sum_{i=1}^\infty \capp{p,\beta}(E\cap B_i,\Omega)^{\frac q p}\Biggr)^{\frac p q}
\] 
whenever $E\subset \Omega$ and 
$\mathcal{W}_c(\Omega)=\{B_i:i\in\N\}$ is a
Whitney cover of $\Omega$.
\end{remark}

\section{Special cases and examples}\label{sect:qreg}

The main results of the paper are obtained under the assumption that the measure $\mu$ is doubling. In particular, 
$\mu$ satisfies for some $Q>0$ the inequality
\[
\frac{\mu(B(y,r))}{\mu(B(x,R))}\ge C\Bigl(\frac rR\Bigr)^Q,
\]
whenever $0<r\le R<\diam X$ and $y\in B(x,R)$. 

If $X$ is connected
(this is guaranteed in our setting by the Poincar\'e inequalities), then there exists
also a constant $Q_u$ satisfying $0<Q_u\le Q$ and such that
\begin{equation}\label{eq:QuReverseDoubling}
   \frac{\mu(B(y,r))}{\mu(B(x,R))}\le C\Bigl(\frac rR\Bigr)^{Q_u},
\end{equation}
whenever $0<r\le R<\diam X$ and $y\in B(x,R)$; see e.g.~\cite[Corollary 3.8]{BjornBjorn2011}.

If there are uniform upper and lower bounds for the measures of
the balls, that is,
\[
C^{-1} r^Q\leq \mu(B(x,r))\leq C r^Q
\]
for every $x\in X$ and all $0<r<\diam(X)$, the measure $\mu$ is said
to be \emph{Ahlfors $Q$-regular}.
The above exponent $Q>0$ plays the same role as $n$ does in $\mathbb{R}^n$. 
In the $Q$-regular case
the $(q,p,\beta)$-Hardy--Sobolev inequality \eqref{eq:(p,q)-weightedhardy} can be written in a simpler form
\begin{equation}\label{eq:(p,q)-weightedhardyQregularSpace}
\biggl(\int_{\Omega} |u(x)|^q d(x,\Omega^c)^{\frac{q}{p}(Q-p+\beta)-Q} \,d\mu(x)\biggr)^{\frac{1}{q}}
   \leq C\biggl(\int_{\Omega} g(x)^p \,d(x,\Omega^c)^{\beta}\,d\mu\biggr)^{\frac{1}{p}},
\end{equation}
and Theorem~\ref{t.weightedhardy_c} implies the following characterization.
In $X=\R^n$ (equipped with the usual Euclidean distance and the Lebesgue measure), 
the corresponding result in terms of Whitney cubes has been considered in~\cite[Theorem~10.52]{KLVbook}
in the unweighted case $\beta=0$,
but the weighted result is new  even in $\R^n$.
Since $N^{1,p}(\R^n)=W^{1,p}(\R^n)$ and $\R^n$ supports $(1,s)$-Poincar\'e inequalities for every $s\ge 1$ with $\lambda=1$,
Theorem~\ref{t.qadd_intro} follows from Corollary~\ref{c.weightedhardy_c_Qreg}.

\begin{coro}\label{c.weightedhardy_c_Qreg}
Let $X$ be a metric space equipped with an Ahlfors $Q$-regular measure $\mu$ and let $1<p\le q\le \frac{Qp}{Q-p}$.
Assume that $X$ supports a $(1,s)$-Poincar\'{e} inequality for some $1<s<p$ with a dilation constant $\lambda$. Let $\mathcal{W}_c(\Omega)=\{B_i:i\in\N\}$, $B_i=(x_i,r_i)$, be a
Whitney cover of an open set $\Omega\subsetneq X$,
with $0<c<\frac{1}{45\lambda+8}$.
Then the following conditions are equivalent:
\begin{enumerate}[label=\textup{(\roman*)}] 

\item\label{it.weightedhardy_c_HS QR} Hardy--Sobolev inequality \eqref{eq:(p,q)-weightedhardyQregularSpace} holds in $\Omega$.

\item\label{it.weightedhardy_c_compact QR} There exist constants $C_1$ and $C_2$ such that
\begin{equation*}
\sum_{i=1}^\infty \capp{p,\beta}(E\cap  B_i,\Omega)^{\frac q p}\le C_1 \capp{p,\beta}(E,\Omega)^{\frac q p},
\end{equation*}
for every set $E\Subset\Omega$, and $\capp{p,\beta}(B_i,\Omega) \ge C_2 r_i^{Q+\beta-p}$ for every $i\in \N$. 

\item\label{it.weightedhardy_c_cubes QR} There exist constants $C_1$ and $C_2$ such that
\begin{equation*}
\sum_{i\in I} \capp{p,\beta}( B_{i},\Omega)^{\frac q p}\le
C_1 \capp{p,\beta}\Biggl(\bigcup_{i\in I} B_{i},\Omega\Biggr)^{\frac q p},
\end{equation*}
whenever $I\subset\N$ is a finite set, and $\capp{p,\beta}(B_i,\Omega) \ge C_2 r_i^{Q+\beta-p}$ for every $i\in \N$. 
\end{enumerate}
\end{coro}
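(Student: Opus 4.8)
The plan is to obtain Corollary~\ref{c.weightedhardy_c_Qreg} as a direct consequence of Theorem~\ref{t.weightedhardy_c}, by checking that, under Ahlfors $Q$-regularity, the Hardy--Sobolev inequality \eqref{eq:(p,q)-weightedhardyQregularSpace} is merely a rewriting of \eqref{eq:(p,q)-weightedhardy}, and that the normalization $\capp{p,\beta}(B_i,\Omega)\ge C_2 r_i^{Q+\beta-p}$ is, up to harmless constants, the capacity lower bound \eqref{eq.weightedcap_lower_w}. Once these two identifications are made, conditions~\ref{it.weightedhardy_c_HS QR}, \ref{it.weightedhardy_c_compact QR} and \ref{it.weightedhardy_c_cubes QR} become literally conditions~\ref{it.weightedhardy_c_HS}, \ref{it.weightedhardy_c_compact} and \ref{it.weightedhardy_c_cubes} of Theorem~\ref{t.weightedhardy_c}, and there is nothing further to prove.

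I would carry this out in three short steps. First, I would recall that an Ahlfors $Q$-regular measure is doubling, with doubling constant depending only on $Q$ and the regularity constant, and that it satisfies \eqref{eq:QDoublingDim} with the exponent $Q$; hence Theorem~\ref{t.weightedhardy_c} applies in the stated range $1<p\le q\le \frac{Qp}{Q-p}$ under the assumed $(1,s)$-Poincar\'e inequality. Second, using $\mu(B(x,\rho))\approx \rho^Q$ with $\rho=d(x,\Omega^c)$, I would note that the factor $\mu(B(x,d(x,\Omega^c)))^{(q-p)/p}$ in the left-hand side of \eqref{eq:(p,q)-weightedhardy} is comparable to $d(x,\Omega^c)^{Q(q-p)/p}$, and since
\[
-\tfrac{q}{p}(p-\beta)+\tfrac{Q(q-p)}{p}=\tfrac{q}{p}(Q-p+\beta)-Q,
\]
the left-hand sides of \eqref{eq:(p,q)-weightedhardy} and \eqref{eq:(p,q)-weightedhardyQregularSpace} are comparable, with constants depending only on $Q$ and the regularity constant; the right-hand sides agree verbatim, so \eqref{eq:(p,q)-weightedhardy} holds in $\Omega$ if and only if \eqref{eq:(p,q)-weightedhardyQregularSpace} does. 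Third, since $\mu(B_i)\approx r_i^Q$, the lower bound \eqref{eq.weightedcap_lower_w}, i.e.\ $\capp{p,\beta}(B_i,\Omega)\ge C\mu(B_i)r_i^{\beta-p}$, is equivalent (after absorbing a factor depending only on $Q$ and the regularity constant into $C_2$) to $\capp{p,\beta}(B_i,\Omega)\ge C_2 r_i^{Q+\beta-p}$. Assembling these equivalences and invoking Theorem~\ref{t.weightedhardy_c} yields the three-way equivalence claimed in the corollary.

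The only point requiring care is bookkeeping: one must verify that every comparability constant introduced depends only on admissible data (here $Q$ and the $Q$-regularity constant), and, slightly more subtly, that the exponent $Q$ appearing in the Sobolev endpoint $\frac{Qp}{Q-p}$ of Theorem~\ref{t.weightedhardy_c} may be taken to be the same $Q$ furnished by $Q$-regularity via \eqref{eq:QDoublingDim}. Neither presents a genuine difficulty, which is precisely why the statement is a corollary rather than a separate theorem; the substantive content is already contained in Theorem~\ref{t.weightedhardy_c}.
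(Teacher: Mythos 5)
Your proposal is correct and matches the paper's (implicit) argument exactly: the corollary is stated as an immediate consequence of Theorem~\ref{t.weightedhardy_c}, obtained by using $\mu(B(x,\rho))\approx\rho^Q$ to identify \eqref{eq:(p,q)-weightedhardyQregularSpace} with \eqref{eq:(p,q)-weightedhardy} and the bound $\capp{p,\beta}(B_i,\Omega)\ge C_2 r_i^{Q+\beta-p}$ with \eqref{eq.weightedcap_lower_w}. Your exponent computation and the remark that the $Q$ from Ahlfors regularity serves as the $Q$ in \eqref{eq:QDoublingDim} are both accurate.
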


When $1<p<Q$ and $\beta=0$, 
the capacity lower bound
$\capp{p}(B,\Omega)\geq C r^{Q-p}$
holds for all Whitney balls
$B=B(x,r)\in\W_c(\Omega)$, at least if
$X$ is unbounded or $\diam(\Omega) < \frac 1 4\diam(X)$;
see~\cite[Lemma~2]{LehrbackShanmugalingam2014}
and~\cite[Proposition~6.1]{BjornBjornLehrback2017} for details. 
Here $\capp{p}(B,\Omega)= \capp{p,0}(B,\Omega)$.
(More generally, the capacity lower bound~\eqref{eq.weightedcap_lower_w}
holds in the unweighted case $\beta=0$ if $1<p<Q_u$, by the same references.)

Consequently, in certain cases, 
for instance if $X$ is an unbounded $Q$-regular space
and $1<p<Q$, $\beta=0$, 
there is no need to assume 
the capacity lower bound in conditions \ref{it.weightedhardy_c_compact QR} and \ref{it.weightedhardy_c_cubes QR} in Corollary~\ref{c.weightedhardy_c_Qreg}. This also shows that
the weighted capacity lower bound~\eqref{eq.weightedcap_lower_w}, for every Whitney ball, is not alone sufficient for the 
$(q,p,\beta)$-Hardy--Sobolev inequality in Theorem~\ref{t.weightedhardy_c}. 
For instance, consider $X=\R^n$ equipped with the usual Euclidean distance and the Lebesgue measure.
Then $Q=n$, and
for every $1<p<n$ and $p\le q < \frac{np}{n-p}$ 
there are open sets where the $(q,p,0)$-Hardy--Sobolev inequality does not hold.
When $1<p=q<n$, one such an example is given by the complement of any (non-empty) Ahlfors $(n-p)$-regular set,
by~\cite[Theorem 1.1]{KoskelaZhong2003}; 
see also~\cite[Example 4.7]{LehrbackVahakangas2016} for counterexamples in more general cases. 

On the other hand, even if the capacity lower bound~\eqref{eq.weightedcap_lower_w}
is not always needed,
it is not possible to remove this part
from conditions \ref{it.weightedhardy_c_compact QR} and \ref{it.weightedhardy_c_cubes QR} in Theorem~\ref{t.weightedhardy_c}.
This is illustrated by the next example.

\begin{example}
Consider $X=\R^n$, equipped with the usual Euclidean distance and the Lebesgue measure. 
Then $N^{1,p}(\R^n)=W^{1,p}(\R^n)$, and $\lvert \nabla u\rvert\in \mathcal{D}_p(u)$ if
$u\in W^{1,p}(\R^n)$;  see e.g.~\cite[A.1 and Proposition A.13]{BjornBjorn2011}.

Let $1<p<\infty$ and $\beta\ge 0$,
and choose $\Omega=B(0,1)\subset \R^n$. For every $j\in\N$, let
$u_j$ be a Lipschitz continuous function in $\R^n$,
satisfying the following properties:
$u_j=1$ in $B(0,1-2^{-j})$, 
$u_j=0$ in $\R^n\setminus B(0,1-2^{-j-1})$, 
and $\lvert \nabla u\rvert\le 2^{j+1}$ almost everywhere in
$A_j=B(0,1-2^{-j-1})\setminus B(0,1-2^{-j})$.
Then we have for every $j\in\N$ that
$u_j\in W_0^{1,p}(\Omega)$ 
has a compact support in $\Omega$ and
\begin{equation*}
\int_{\Omega} \lvert \nabla u_j(x)\rvert^p \,d(x,\Omega^c)^{\beta}\,d\mu
\le \mu(A_j)2^{(j+1)p}2^{-j\beta} \le C 2^{-j(1-p+\beta)}.
\end{equation*}
If $\beta>p-1$, then $2^{-j(1-p+\beta)}\to 0$ as $j\to\infty$.

Let $\mathcal{W}_c(\Omega)=\{B_i:i\in\N\}$ be a
Whitney cover of $\Omega$.
When $i\in\N$ is fixed,
the functions $u_j$ are test functions
for the capacity $\capp{p,\beta}(B_i,\Omega)$ for all sufficiently large $j$.
Thus the above computation shows that if $\beta > p-1$, 
then $\capp{p,\beta}(B_i,\Omega)=0$ for all Whitney balls $B_i\in\mathcal{W}_c(\Omega)$,
and so the capacity lower bound~\eqref{eq.weightedcap_lower_w} does not hold in this case.
Lemma~\ref{eq.LowerBallweightedCapacity} (or a direct computation for the functions $u_j$) 
implies that neither does the $(q,p,\beta)$-Hardy--Sobolev inequality hold when $\beta>p-1$.
Nevertheless, the quasiadditivity property
\[
\sum_{i\in \N} \capp{p,\beta}(E\cap B_i,\Omega)^{\frac{q}{p}} \leq  C \capp{p,\beta}(E,\Omega)^{\frac{q}{p}}
\]
holds trivially
for every set $E\Subset\Omega$, since all the terms on the left-hand side are zero.
This shows that 
in addition to the quasiadditivity property, also
the capacity lower bound is really needed in assertions
\ref{it.weightedhardy_c_compact QR} and~\ref{it.weightedhardy_c_cubes QR}
of Theorem~\ref{t.weightedhardy_c}
and in the corresponding assertions of Corollary~\ref{c.weightedhardy_c_Qreg}.
\end{example}


\end{document}